\newenvironment{jfnote}{ \bgroup \color{blue} }{\egroup}
\newcommand{\rhonew}{\rho^{\mathrm{new}}}
\newcommand{\specnew}{\Spec^{\mathrm{new}}}
\newcommand{\Specnew}{\Spec^{\mathrm{new}}}
\newcommand{\Edir}{E^{\mathrm{dir}}}
\theoremstyle{plain}
\newtheorem{theorem}{Theorem}[section]
\newtheorem{proposition}[theorem]{Proposition}
\newtheorem{conjecture}[theorem]{Conjecture}
\theoremstyle{definition}
\newtheorem{definition}[theorem]{Definition}
\newtheorem{notation}[theorem]{Notation}
\newtheorem{remark}[theorem]{Remark}
\newcommand{\ignore}[1]{}
\newcommand{\complex}{{\mathbb C}}
\newcommand{\prob}[2]{{\PP}_{#1}{\left[\; #2\; \right]}}
\newcommand\EE{\mathbb{E}}
\newcommand\II{\mathbb{I}}
\newcommand\PP{\mathbb{P}}
\DeclareMathAlphabet{\mathcal}{OMS}{cmsy}{m}{n}
\newcommand\cC{\mathcal{C}}
\newcommand\cG{\mathcal{G}}
\newcommand\cH{\mathcal{H}}
\newcommand\cI{\mathcal{I}}
\newcommand\cL{\mathcal{L}}
\newcommand\cP{\mathcal{P}}
\newcommand\cS{\mathcal{S}}
\newcommand\frakp{\mathfrak{p}}
\newcommand{\expect}[2]{{\EE}_{#1} \left[ {#2} \right] }
\DeclareMathOperator{\Line}{Line}
\def\from{\colon}
\def\eqdef{\overset{\text{def}}{=}}
\DeclareMathOperator{\id}{id}
\DeclareMathOperator{\Spec}{Spec}
\DeclareMathOperator{\Tr}{Tr}
\DeclareMathOperator{\trace}{Tr}
\DeclareRobustCommand
\p@\hbox{.}\mkern2mu\raise7\p@\hbox{.}\mkern1mu}}
\newcommand\xhookrightarrow[2][]{\ext@arrow 0062{\hookrightarrowfill@}{#1}{#2}}
\def\hookrightarrowfill@{\arrowfill@\lhook\relbar\rightarrow}
\definecolor{dblue}{rgb}{0.0,0.0,0.5}
\definecolor{dred}{rgb}{0.5,0.0,0.0}
\begin{document}
\title[Zeta Functions and Ramanujan Graphs]
{Formal Zeta Function Expansions and the Frequency of Ramanujan 
Graphs}

\author{Joel Friedman}
\address{Department of Computer Science, 
        University of British Columbia, Vancouver, BC\ \ V6T 1Z4, CANADA,
        and Department of Mathematics, University of British Columbia,
        Vancouver, BC\ \ V6T 1Z2, CANADA. }
\curraddr{}
\email{{\tt jf@cs.ubc.ca} or {\tt jf@math.ubc.ca}}
\thanks{Research supported in part by an NSERC grant.}
\urladdr{http://www.math.ubc.ca/~jf}

\subjclass[2010]{Primary }

\keywords{}

\begin{abstract}
We show that logarithmic derivative of the Zeta function
of any regular graph
is given by a power series about infinity
whose coefficients are given in terms of the traces of powers
of the graph's Hashimoto matrix.

We then consider the expected value of this power series
over random, $d$-regular graph on $n$ vertices, with 
$d$ fixed and $n$ tending to infinity.
Under rather speculative assumptions, we make a formal calculation
that suggests that for fixed $d$ and $n$ large, this expected value
should have simple poles of residue $-1/2$ at $\pm (d-1)^{-1/2}$.
We shall explain that calculation suggests that for fixed $d$ there is
an $f(d)>1/2$ such that a $d$-regular graph on $n$ vertices is Ramanujan
with probability at least $f(d)$ for $n$ sufficiently large.

Our formal computation has a natural analogue when we consider
random covering graphs of degree $n$ over a
fixed, regular ``base graph.''  
This again suggests that for $n$ large, a
strict majority of random covering graphs are relatively Ramanujan.

We do not regard our formal calculations as providing overwhelming 
evidence regarding the frequency
of Ramanujan graphs.  However, these calculations are
quite simple, and yield intiguing suggestions
which we feel merit further
study.
\end{abstract}
\maketitle
\setcounter{tocdepth}{3}
\tableofcontents




\section{Introduction}

In this paper we shall give a calculation that suggests that 
there should be many Ramanujan graphs of any fixed degree and any
sufficiently large number of vertices.
Our calculation is quite speculative, and makes a number of
unjustified assumptions.
However, our calculations are quite simple and give an intriguing
suggestion; we therefore find these calculations---at the very least---a
curiosity that merits
further study.

In more detail, we show that the logarithmic derivative of the 
Zeta function of a regular graph has a simple power series expansion at
infinity.  The coefficients of this power series invovle traces of
successively larger powers of the Hashimoto matrix of a graph.
For a standard model of a $d$-regular graph on $n$ vertices,
we consider the expected value of this logarithmic
derivative for fixed $d$ and large $n$.
We make a number of assumptions to make a formal computation
which suggests that for fixed $d$,
the expected number of real poles near each of $\pm (d-1)^{-1/2}$
tends to $1/2$ as $n$ tends to infinity.
Assuming this $1/2$ is caused entirely by real poles of the Zeta
function,
then the expected number of positive or negative
adjacency eigenvalues of absolute value in the interval
$[2(d-1)^{1/2},d)$ would be $1/4$ for both cases, positive and negative.
Under the likely assumption that for fixed $d$,
a random, $d$-regular graph on 
$n$ vertices will have two or more such eigenvalues with probability bounded
away from zero, then for sufficiently large $n$, a strict majority
of random graphs are Ramanujan.
We emphasize that all these conclusions are quite speculative.

Our computation involves a contour integral of the expected
logarithmic derivative near $u=(d-1)^{-1/2}$, where our formal
computation suggests that this
expected logarithmic derivative has a pole.  We write the contour
in a particular way that assumes that the residue at this pole
arises entirely from real eigenvalues of the Hashimoto matrix.
However it is conceivable that some of the complex Hashimoto eigenvalues
near $\pm 2(d-1)^{1/2}$
also contribute to this pole, in which case (if our other assumptions
are correct) the limiting expected number of positive and negative
real Zeta function poles may each be less than $1/4$.

We caution the reader that part of our contour passes lies the open ball
$|u| < (d-1)^{-1/2}$, which is one serious issue in the above
computation.  Indeed, for fixed $d$ and $n$ large, the Zeta function of
a random, $d$-regular graph on $n$ vertices has poles
throughout the circle $|u|=(d-1)^{-1/2}$; this follows easily from
the fact that a random such graph
has a bounded expected number of cycles of any fixed length, and so
its adjacency eigenvalue distribution tends to the Kesten-McKay
distribution (see \cite{mckay}).
Hence, for large $n$ the true expected Zeta function
should have a pole distribution throughout $|u|=(d-1)^{-1/2}$, which
makes it highly
speculative to work in any part of the region $|u|<(d-1)^{-1/2}$.
At the same time, this makes the residue of $-1/2$ at $\pm(d-1)^{-1/2}$
in the formal calculation all the more interesting, and is why we
may conjecture that the imaginary poles near $\pm(d-1)^{-1/2}$ may
possibly contribute this $-1/2$, if there is any true sense to this
residue.

It is interesting that the $-1/2$ comes from a computation on
the trace method for regular graphs that was essentially done
by Broder and Shamir \cite{broder}, although more justification
for our computation comes from the asymptotic expansions of
expected trace powers given later improvements of 
the Broder-Shamir method, namely
\cite{friedman_random_graphs,friedman_alon,friedman_kohler}.

As far as we know, this is the first direct application of Zeta functions
per se to graph theory.
Zeta functions of graphs arose first in the study of $\frakp$-adic
groups \cite{ihara,serre_trees}, and developed for general
graphs by Sunada, Hashimoto, and Bass (see \cite{terras_zeta},
beginning of Part 2).
Ihara's determinantal formula gave rise to what is now often called
the {\em Hashimoto matrix} of a graph, which can be used to count
{\em strictly non-backtracking} closed walks in a graph.
Although the underlying graph of the
Hashimoto matrix appeared in graph theory in the
1940's and 1960's (see \cite{deB,harary_line,knuth_oriented}),
the study of its spectral properties seems largely inspired by
the above work on graph Zeta functions.
Friedman and Kohler \cite{friedman_kohler} note that in the trace
method for random, regular graphs,
one gets better adjacency eigenvalue bounds if one first
gets 
analogous trace estimates for the Hashimoto matrix.
Furthermore, the solution to the Alon Second Eigenvalue Conjecture
\cite{friedman_alon} involved trace methods for the Hashimoto matrix
rather than for the adjacency matrix.
Hence the Hashimoto matrix---and therfore, by implication, also 
Zeta functions---have
played a vital role in graph theory.
However, we know of no previous
direct applications of Zeta functions to obtain new 
theorems or conjectures in graph theory.
Our theorems and conjectures---although they involve the Hashimoto 
matrix in their expansion at
infinity---seem to fundamentally involve Zeta functions.

We remark that the expected traces of
Hashimoto matrix powers are difficult to study directly.
Indeed, these expected traces are complicated by {\em tangles}
\cite{friedman_alon,friedman_kohler}, which are---roughly speaking---low 
probability
events that force a graph to have large, positive real Hashimoto
eigenvalues.  It is known that such tangles must be removed to
prove the Alon conjecture
\cite{friedman_alon} or its relativization \cite{friedman_kohler};
furthermore by modifying trace powers to eliminate the pathological
effect of tangles, the asymptotic expansions of expected trace powers
become much simpler.
For this reason we introduce a 
second formal power series, 
whose terms are a variant of the above terms, such that
(1) it is probably simpler to understand the terms of this
second formal power series, and
(2) we believe that the second
set of terms contain very similar information to the first.

We can generalize the above discussion to random covering 
maps of degree $n$ over a fixed, regular  ``base graph.''
Doing so gives the analogous formal computation
that suggests that for large $n$ we expect
that a majority of random cover maps to be {\em relatively
Ramanujan}.
Again, the $1/4$ we get (or $1/2$ for random, bipartite, regular
graphs) comes from the analogue of the Broder-Shamir
computation \cite{friedman_relative} for random covering maps, but is further
justified by higher order expansions
\cite{linial_puder,puder,friedman_kohler}.

The rest of this paper is organized as follows.
In Section~\ref{se:main} we desribe our main theorems and conjectures.
In Section~\ref{se:graphs} we describe our terminology
regarding graphs and Zeta functions.
In Section~\ref{se:zeta} we prove our expansion near infinity of
the logarithmic derivative of the Zeta function of a graph.
In Section~\ref{se:expected} we make a formal calculation of the
expected above logarithmic derivative and make numerous 
conjetures.
In Section~\ref{se:simpler} we describe variants of this formal
computation which we believe will be easier to study, and yet will
contain essentially the same information.
In Section~\ref{se:covering} we desribe other models of random graphs,
especially covering maps of degree $n$ over a fixed, regular base
graph.
In Section~\ref{se:numerical} we briefly describe our numerical experiments
and what previous experiments in the literature have suggested.

\section{Main Results}
\label{se:main}

In this section we
state our main results, although we will use some terminology
to be made precise in later sections.
Let us begin with 
the notion of a random graph that we use.  For positive integers
$n$ and $d\ge 3$ we consider a random $d$-regular graph on $n$
vertices.  It is simplest to think of $d$ as an even integer with
a random graph generated by $d/2$ permutations on $\{1,\ldots,n\}$,
which we denote $\cG_{n,d}$,
as was used in
\cite{broder,friedman_random_graphs,friedman_alon}; 
our models therefore allow for multiple edges and self-loops.
We remark that
there are similar ``algebraic'' models for $d$ and $n$ of any
parity \cite{friedman_alon} which we shall
describe in Section~\ref{se:covering}.

We will give a formal calculation that indicates that for fixed $d$
the expected number of adjacency eigenvalues 
of a graph in $\cG_{n,d}$ of absolute value in $[2(d-1)^{1/2},d)$
is one-half, $1/4$ positive and $1/4$ negative;
a more conservative conjecture is that
$1/4$ is an upper bound on each side.
We also conjecture that the probability that for fixed $d$ and
$n\to\infty$, a graph of $\cG_{n,d}$
has at least two such eigenvalues is bounded from below by a positive
constant.
These two conjectures---if true---imply that for any fixed $d$, 
for all $n$ sufficiently large a strict majority of graphs in $\cG_{n,d}$
are Ramanujan.

After explaining our conjecture, we will comment on generalizations
to random covering maps of degree $n$ to a fixed, regular graph.
Then we will describe some numerical experiments we made to
test our conjecture; 
although these calculations suggest that our formal computation
may be close to the correct answer, our calculations are done
on graphs with under one million vertices (and assume that
certain software is computing correctly);
it may be 
that one needs more vertices to see the correct 
trend, and it is commonly believed
that there are fewer Ramanujan graphs than our formal
calculation
suggests; see \cite{miller_novikoff}.

Let us put our conjectures in a historical context.
For a graph, $G$, on $n$ vertices, we let
$$
\lambda_1(G) \ge \lambda_2(G) \ge \cdots \ge \lambda_n(G)
$$
be the $n$ eignevalues of $A_G$, the adjacency matrix of $G$.
In \cite{alon_eigenvalues}, 
Noga Alon conjectured that for fixed integer $d\ge 3$
and $\epsilon>0$, as $n$ tends to infinity, the probability that
a random $d$-regular graph, $G$, on $n$ has
$$
\lambda_2(G)\le 2(d-1)^{-1/2} + \epsilon
$$
tends to one.  Alon's interest in the above conjecture was that
the above condition on $\lambda_2(G)$ implies 
(\cite{gabber_galil,alon_milman,tanner})
that $G$ has a number of
interesting isoperimetric or ``expansion'' properties.
Broder and Shamir \cite{broder} introduced a trace method to study the above
question; \cite{broder,friedman_kahn_szemeredi,friedman_random_graphs}
gave high probability bounds on $\lambda_2(G)$ with
$2(d-1)^{-1/2}$ replaced with a larger constant, and
\cite{friedman_alon} finally settled the original conjecture.
We remark that all the aforementioned papers actually give stronger
bounds, namely with $\lambda_2(G)$ replaced with
$$
\rho(G) \eqdef \max_{i\ge 2} |\lambda_2(G)|.
$$

For many applications, it suffices to specify one particular graph,
$G\in\cG_{n,d}$, which satisfies the bound in Alon's conjecture.
Such graphs were given in 
\cite{lps,margulis,morgenstern}; \cite{lps} coined the term
{\em Ramanujan graph} to describe a $d$-regular graph, $G$, satisfying
$$
\lambda_i(G) \in \{d,-d\} \cup [-2(d-1)^{1/2},2(d-1)^{1/2}] 
$$
for all $i$.  This is stronger than Alon's conjecture in that
the $\epsilon$ of Alon becomes zero; it is weaker than
what trace methods prove, in that $-d$ is 
permitted as an eigenvalue, i.e., the graph may be bipartite.
Recently \cite{mss} have proven the existence of a sequence of
bipartite
Ramanujan graphs of any degree $d$ for a sequence of $n$'s tending
to infinity; \cite{lps,margulis,morgenstern} constructed sequences
for $d$ such that $d-1$ is a prime power, but the 
\cite{lps,margulis,morgenstern} are more explicit---contructible
in polynomial of $\log n$ and $d$---than those of 
\cite{mss}---constructible in polynomial of $n$ and $d$.
Our results do not suggest any obvious method of constructing
Ramanujan graphs.

Our formal calculation for $\cG_{n,d}$ is based on two results:
Ihara Zeta functions of graphs
(see \cite{terras_zeta}), and a trace estimate essentially known
since \cite{broder}.
We give a stronger conjecture based on the trace methods and
estimates of
\cite{friedman_random_graphs,friedman_alon,friedman_kohler},
which give somewhat more justification for the formal calculation
we describe.

%

Let us roughly describe our methods.  
We will consider the function
$$
\expect{G\in\cG{n,d}}{\zeta_G'(u)/\zeta_G(u)},
$$
i.e., the expected logarithmic derivative of $\zeta_G(u)$; 
using the results of \cite{friedman_alon,friedman_kohler} regarding
the Alon conjecture, we can
write the expected number of eigenvalues equal to or
greater than $2(d-1)^{1/2}$
as half of a contour integral of the above logarithmic
derivative near $2(d-1)^{-1/2}$ (really minus the logarithmic derivative,
since we are counting poles).  
This contour integral is 
essentially unchanged if we replace minus this logarithmic 
derivative by the simpler expression
$$
\cL_G(u) = \sum_{k=0}^\infty u^{-1-k} \Tr(H_G^k)(d-1)^{-k} ,
$$
where $H_G$ denotes the 
Hashimoto matrix of a graph, $G$.
We recall \cite{friedman_alon,friedman_kohler}
(although essentially from \cite{friedman_random_graphs}) that we can estimate
the expected traces of Hashimoto matrices as
\begin{equation}\label{eq:Hash_asymp}
\expect{G\in\cG{n,d}}{\Tr(H_G^k)}
=P_0(k)+P_1(k)n^{-1}+\cdots+ P_{r-1}(k)n^{1-r} + {\rm err}_{n,k,r}
\end{equation}
where the $P_i(k)$ are functions of $k$ and $d$, and for any fixed
$r$ we have
$$
|{\rm err}_{n,k,r}| \le C k^C n^{-r} (d-1)^k
$$
for some $C=C(r)$.
The leading term, $P_0(k)$, has been essentially known since
\cite{broder} (see
\cite{friedman_random_graphs,friedman_alon,friedman_kohler})
to be
$$
P_0(k) = (d-1)^k + (d-1)^{k/2} \II_{\rm even}(k) + O(k) \; (d-1)^{k/3},
$$
where $\II_{\rm even}(k)$ is the indicator function that $k$ is even.
If we assume that we may evaluate the expected value of
$\cL_G(u)$ by writing is as a formal sum,
term by term, using
\eqref{eq:Hash_asymp}, then the
$$
(d-1)^{k/2} \II_{\rm even}(k)
$$
term of $P_0(k)$ gives a term of the expected value of $\cL_G(u)$ equal to
$$
\frac{u}{u^2-(d-1)},
$$
whose residue at $u=\pm(d-1)^{-1/2}$ is $1/2$.  If this exchange
of summation gives the correct asymptotics, i.e., the sum of
the terms corresponding
to $P_i(k)n^{-i}$ for $i\ge 1$ tends to zero as $n\to\infty$,
then as $n\to\infty$ we would have that the expected number
of positive real poles and negative real poles is $1/2$ each (with the
$\pm(d-1)^{-1/2}$ countributing one-half times their expected
multiplicity).

It is known that for large $i$, the $P_i(k)$ are problematic due to
``tangles'' \cite{friedman_alon}, which are certain low probability events
in $\cG_{n,d}$ that force a large second adjacency eigenvalue.
Hene we might wish to modify the $P_i(k)$, as done
in \cite{friedman_alon,friedman_kohler}, by introducing a
variant of the Hashimoto trace.  
This leads us to later introduce the related functions
$\widehat P_i(k)$, which we believe will be easier to study
but contain almost the same information as the $P_i(k)$.
We shall also explain a generalization of this calculation to models
of covering graphs of degree $n$ over a fixed, regular ``base graph.''

For reasons that we explain, this formal calculation takes a number
of ``leaps of faith'' that we cannot justify at this point;
on the other hand, 
it seems like a natural formal calculation to make, and it suggests
an intriguing conjecture.

We have made a brief, preliminary experimental
investigation of this conjecture with $d=4$ and $d=6$ for positive
poles; these
experiments are not particularly conclusive:
according to \cite{miller_novikoff} the true trend may require
very large values of $n$: this is based on the 
calculations that the ``width of concentration''
of the second largest adjacency eigenvalue will eventually overtake its 
mean's distance
to $2(d-1)^{1/2}$.
However, if this width of concentration is of the same order of
magnitude as its distance to $2(d-1)^{1/2}$ as $n\to\infty$, then 
our conjecture does not contradict the other
findings of \cite{miller_novikoff}
(regarding a Tracy-Widom distribution over the width of concentration).
Our experiments are made with
graphs of only $n\le 400,000$ vertices. 
For graphs of this size or smaller
it does not
look like the expected number of real poles has
stabilized; however, in all of our experiments this expected number
is smaller than $1/4$ 
for all but very small
values of $n$.

\section{Graph Theoretic Preliminaries}
\label{se:graphs}

In this subsection we give specify our precise
definitions for a number of
concepts in graph and algebraic graph theory.  We note that such definitions
vary a bit in the literature.
For example,
in this paper graphs may have multiple edges and two types of
self-loops---half-loops and whole-loops---in the terminology of
\cite{friedman_geometric_aspects};
also see \cite{st1,st2,st3}, for example, regarding half-loops.

\subsection{Graphs and Morphisms} 

\begin{definition} A {\em directed graph}
(or
\emph{digraph}) is a tuple \( G = (V, \Edir, t, h) \) where \( V \) and
\(\Edir\) are sets---the {\em vertex} and 
{\em directed edge} sets---and \( t
\from \Edir \rightarrow V \) is the \emph{tail map} and \( h \from \Edir
\rightarrow V \) is the \emph{head map}. A directed edge \(e\) is called 
{\em self-loop}
if \( t(e) = h(e) \), that is, if its tail is its head.
Note that our definition also allows for \emph{multiple edges}, that is
directed edges with identical tails and heads.  Unless
specifically mentioned, we will only consider directed graphs which have
finitely many vertices and directed edges.
\end{definition}

A graph, roughly speaking, is a
directed graph with an involution that pairs the edges.

\begin{definition} An {\em undirected graph} (or simply a {\em graph}) is a
tuple \( G = (V, \Edir, t, h, \iota) \) where \( (V, \Edir, t, h) \) is a
directed graph and where \( \iota \from \Edir \rightarrow \Edir \), called
the {\em opposite map}
or {\em involution} of the graph,
is an involution on the set of directed edges
(that is, \( \iota^2=\id_{\Edir} \) is the identity) satisfying \( t \iota
= h \). The directed graph \( G = (V, \Edir, t, h) \) is called the
\emph{underlying directed graph} of the graph \(G\). If \(e\) is an edge,
we often write $e^{-1}$ for $\iota(e)$ and call it the \emph{opposite edge}. 
A
self-loop \(e\) is called a {\em half-loop}
if \( \iota(e) = e\), and otherwise is called a
{\em whole-loop}.

The opposite map induces an equivalence relation on the directed edges of
the graph, with $e\in \Edir$ equivalent to $\iota e$;
we call the quotient set, \( E \), the 
\emph{undirected edge} of the 
graph \(G\) (or simply its \emph{{edge}}). 
Given an edge of a
graph, an \emph{orientation}
of that edge is the choice of a representative
directed edge in the equivalence relation (given by the opposite map).
\end{definition}

\begin{notation} For a graph, $G$, we use the notation
$V_G,E_G,\Edir_G,t_G,h_G,\iota_G$ to denote
the vertex set, edge set, directed edge set, tail map, head map, 
and opposite map of $G$; similarly for directed graphs, $G$.
\end{notation}

\begin{definition}
Let $G$ be a directed graph.
The \emph{adjacency matrix}, \(A_G\), of $G$
is the 
square matrix indexed on the vertices, $V_G$, whose \( (v_1, v_2) \)
entry is the number of directed edges whose tail is the vertex \(v_1\) and
whose 
head is the vertex \(v_2\). 
The {\em indegree} (respectively {\em outdegree}) of a vertex, $v$,
of $G$ is the number
of edges whose head (respectively tail) is $v$.

The adjacency matrix of an undirected graph, $G$, is
simply the adjacency matrix of its underlying directed graph. 
For an undirected graph, the indegree of any vertex equals its outdegree,
and is just called its {\em degree}.
The {\em degree matrix} of $G$ is the diagonal matrix, 
$D_G$, indexed on $V_G$
whose $(v,v)$ entry is the degree of $v$.
We say that $G$ is {\em $d$-regular} if $D_G$ is $d$ times the identity
matrix, i.e., if each vertex of $G$ has degree $d$.
\end{definition}

For any non-negative integer $k$, the number of closed walks of length
$k$ is a graph, $G$, is just the 
trace, $\Tr(A_G^k)$, of the $k$-th power
of $A_G$.

\begin{notation}\label{no:lambda}
Given a graph, $G$, the matrix $A_G$ is symmetric, and
hence the eigenvalues of $A_G$ are real and can be ordered
$$
\lambda_1(G) \ge \cdots \ge \lambda_n(G),
$$
where $n=|V_G|$.  We reserve the notation $\lambda_i(G)$ to denote the
eigenvalues of $A_G$ ordered as above.
\end{notation}

If $G$ is $d$-regular, then $\lambda_1(G)=d$.

\begin{definition}
Let $G$ be a graph.  We define the 
\emph{{directed line graph}} or 
{\em {oriented line graph}} of $G$,
denoted 
\( \Line(G) \), to be the
directed graph \( L=\Line(G) = (V_L, \Edir_L, t_L, h_L) \) given as follows:
its vertex set, \( V_L \), is the set \( \Edir_G \) of directed edges of
\(G\); its set of directed edges is defined by \[
\Edir_L = \left\{ (e_1,e_2) \in \Edir_G \times \Edir_G \mid h_G(e_1) =
t_G(e_2) \text{ and } \iota_G(e_1) \neq e_2 \right\} \] that is,
$\Edir_L$ corresponds to the
non-backtracking walks of length two in $G$. 
The tail and head maps are simply
defined to be the projections in each component, that is by \(t_L(e_1,e_2)
= e_1 \) and \( h_L(e_1,e_2) =e_2 \).

The {\em {Hashimoto matrix}} 
of $G$ is the adjacency matrix of its 
directed line graph, denoted
$H_G$, which is, therefore, a square matrix indexed
on $\Edir_G$.
We use the symbol $\mu_1(G)$ to denote the Perron-Frobenius eigenvalue
of $H_G$, and use $\mu_2(G),\ldots,\mu_m(G)$, where $m=|\Edir_G|$,
to denote the remaining eigenvalues, in no particular order (all
concepts we discuss about the $\mu_i$ for $i\ge 2$ will not depend
on their order).
\end{definition}

If $G$ is $d$-regular, then $\mu_1(G)=d-1$.

It is easy to see that
for any positive integer $k$, the number of strictly non-backtracking
closed walks of length
$k$ in a graph, $G$, equals the trace, $\Tr(H_G^k)$, of the $k$ power
of $H_G$; of course, the strictly non-backtracking walks begin and end
in a vertex, whereas $\Tr(H_G^k)$ most naturally counts walks beginning
and ending in an edge; the correspondence between the two notions can
be seen by taking a walk of $\Line(G)$,
beginning and ending an in a directed edge, $e\in\Edir_G$, and mapping
it to the strictly non-backtracking closed walk in $G$ beginning at,
say, the tail of $e$.

For graphs, $G$, that have half-loops, the Ihara determinantal formula
takes the form (see \cite{friedman_alon,st1,st2,st3}):
\begin{equation}\label{eq:Zetahalf}
\det(\mu I -H_G) = \det\bigl(\mu^2 I - \mu A_G + (D_G-I) )
(\mu-1)^{|{\rm half}_G|}
(\mu^2-1)^{|V_G|-|{\rm pair}_G|},
\end{equation}
where ${\rm half}_G$ is the set of half-loops of $G$, and
${\rm pair}_G$ is the set of undirected edges of $G$ that are not
half-loops, i.e., the collection of sets of the form, $\{e_1,e_2\}$
with $\iota e_1=e_2$ but $e_1\ne e_2$.

\section{Variants of the Zeta Function}
\label{se:zeta}

For any graph, $G$, recall that
$A_G$ denotes its adjacency matrix, and 
$H_G$ denotes its Hashimoto matrix, i.e., the adjacency matrix of what is
commonly called $G$'s {\em oriented line graph}, $\Line(G)$.
If $\zeta_G(u)$ is the Zeta function
of $G$, then we have
$$
\zeta_G(u) = \frac{1}{\det(I-u H_G)},
$$
which, for $d$-regular $G$,
we may alternatively write via the Ihara determinantal formula
\begin{equation}\label{eq:ihara_det}
\det(I-u H_G) =
\det\Bigl(I-u A_G- u^2(d-1)\Bigr)(1-u^2)^{-\chi(G)}
\end{equation}
(provided $G$ has no half-loops, with a simple modification if
$G$ does).

\begin{definition}
Let $G$ be a $d$-regular graph.
We call an eigenvalue of $H_G$ {\em non-Ramanujan} if it is purely
real and different from
$$
\pm 1,\pm (d-1)^{1/2},\pm (d-1).
$$
We say that an eigenvalue of $A_G$ is {\em non-Ramanujan} if it
is of absolute value strictly between $2(d-1)^{1/2}$ and $d$.
It is known that the number of non-Ramanujan eigenvalues of $H_G$ is
precisely twice the number of eigenvalues of $A_d$.
$G$ is called {\em Ramanujan} if it has no non-Ramanujan $H_G$
eigenvalues,
or, equivalently, no non-Ramanujan $A_G$ eigenvalues.
Similary for positive non-Ramanujan eigenvalues of both $H_G$ and $A_G$,
and the same with ``positive'' replaced with ``negative.''
\end{definition}

\begin{notation}
For any $\epsilon,\delta>0$, let $C^+_{\epsilon,\delta}$ be the
boundary of the rectangle
\begin{equation}\label{eq:Cplus}
\{ x+iy\in\complex\ |\ |1-x(d-1)^{1/2}|\le \epsilon,\ |y|\le \delta
\};
\end{equation}
define $C^-_{\epsilon,\delta}$ similarly, with $x$
replaced with
$-x$.
\end{notation}

\begin{definition} We say that a $d$-regular graph, $G$, is 
{\em $\epsilon$-spectral} if $G$'s real Hashimoto eigenvalues lie in
set
$$
\{-(d-1),-1,1,(d-1)\} \cup \{ x \ | \  |1-x(d-1)^{-1/2}| < \epsilon\}.
$$
\end{definition}

We remark that for fixed $d\ge 3$ and $\epsilon>0$, 
it is known that a fraction $1-O(1/n)$ of
random $d$-regular graphs on $n$ vertices are
$\epsilon$-spectral \cite{friedman_alon}.

For $\epsilon$-spectral $G$ we have that
the number of real, positive Hashimoto eigenvalues is given by
$$
\frac{1}{2\pi i}
\int_{C^+_{\epsilon,\delta}} \frac{-\zeta_G'(u)}{\zeta_G(u)}\;du
$$
for $\delta>0$ sufficiently small,
where $C^+_{\epsilon,\delta}$ is traversed in the counterclockwise
direction; similarly for negative eigenvalues.

Observe that for each $G$, for large $|u|$ we have
$$
-\zeta_G'(u)/\zeta_G(u) =
\sum_{\mu\in{\rm Spec}(H_G)} -\mu\,(1-u\mu)^{-1} ,
$$
where ${\rm Spec}(H_G)$ denotes the set of eigenvalues of $H_G$, counted
with multiplicity, and hence
$$
-\zeta_G'(u)/\zeta_G(u) 
=\sum_{\mu\in{\rm Spec}(H_G)} 
\ \sum_{k=0}^\infty  u^{-1-k} \mu^{-k}.
$$
By the Ihara determinantal formula, the set of eigenvalues, $\mu$, of 
$H_G$, consists of
$\pm 1$, with multiplicity $-\chi(G)$, and, in addition, the eigenvalues
that arise as the roots, $\mu_1,\mu_2$, from an equation
$$
\mu^2 - \mu \lambda + (d-1) = 0,
$$
where $\lambda$ ranges over all
eigenvalues of $A_G$; in particular, any pair $\mu_1,\mu_2$ as such
satisfy
$\mu_1\mu_2=d-1$; hence to sum over $\mu^{-1}$
over the pairs $\mu_1,\mu_2$ as such 
is the same as summing over
$\mu/(d-1)$ of all such eigenvalues.  It easily follows that
$$
\sum_{\mu\in{\rm Spec}(H_G)} 
\mu^{-k} = \bigl( 1 + (-1)^k \bigr) n(d-2)/2  +
\Bigl( \Tr(H_G^k) - \bigl( 1 + (-1)^k \bigr) n(d-2)/2 \Bigr) (d-1)^{-k}  
$$
where $\Tr$ denotes the trace, and hence
$$
-\zeta_G'(u)/\zeta_G(u) =
\cL_G(u) + e(u),
$$
where
$$
\cL_G(u) = \sum_{k=0}^\infty u^{-1-k} \Tr(H_G^k)(d-1)^{-k},
$$
and
$$
e(u)
=
\sum_{k=0}^\infty u^{-1-k}
\bigl(1-(d-1)^{-k} \bigr) \bigl( 1 + (-1)^k \bigr) n(d-2)/2  
$$
$$
=\frac{n(d-2)}{2u} 
\sum_{k\ge 0} \Bigl[ u^{-k} +(-u)^{-k} - \bigl( (d-1)u\bigr)^{-k} - 
\bigl( -(d-1)u\bigr)^{-k} \Bigr]
$$
$$
=\frac{n(d-2)}{2u} 
\left[ \frac{1}{1-u} + \frac{1}{1+u} - \frac{1}{1-(d-1)u}
- \frac{1}{1+(d-1)u}  \right] \  .
$$

It follows that $e(u)$ is a rational function with poles only at
$u=\pm 1$ and $\pm1/(d-1)$.  
Furthermore, the $e(u)$ poles at $\pm 1$ have residue
$-\chi(G)$.

\begin{definition}
\label{de:calL}
Let $G$ be a $d$-regular graph without half-loops.  We define the
{\em essential logarithmic derivative} to be the meromorphic function
complex function
$$
\cL_G(u) = \sum_{k=0}^\infty u^{-1-k} \Tr(H_G^k)(d-1)^{-k}.
$$
\end{definition}
We caution the reader that $\cL_G(u)$ is the interesting part of
{\em minus} the usual logarithmic derivative of $\zeta_G(u)$ (since
we are
interested in poles, not zeros).
Clearly $\Tr(H_G^k)$ is bounded by the number of non-backtracking
walks of length $k$ in $G$, i.e., $|V_G|d(d-1)^{k-1}$, and hence 
the above expansion for $\cL_G(u)$ converges for all $|u|>1$.
 
We summarize the above discussion in the follow proposition.

\begin{proposition} Let $G$ be a $d$-regular, $\epsilon$-spectral
graph.  Then, with $C^+_{\epsilon,\delta}$ as in
\eqref{eq:Cplus}, we have
that the number of positive non-Ramanujan
Hashimoto eigenvalues is given by
$$
\frac{1}{2\pi i}
\int_{C^+_{\epsilon,\delta}} \cL(u)\;du
$$
for $\delta>0$ sufficiently small; similarly
for negative eigenvalues.
\end{proposition}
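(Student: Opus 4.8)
The plan is to run the argument principle for $\zeta_G$ along $C^+_{\epsilon,\delta}$ and then pass from $-\zeta_G'/\zeta_G$ to $\cL_G$ via the decomposition $-\zeta_G'(u)/\zeta_G(u)=\cL_G(u)+e(u)$ derived above. Recall that $e$ is a rational function with poles only at $u=\pm1,\pm1/(d-1)$, and that the displayed identity holds between rational functions on $\complex$, since both sides agree on $|u|>1$, where the defining series of $\cL_G$ converges; in particular $\cL_G$ is meromorphic on $\complex$ with the same poles as $-\zeta_G'/\zeta_G$ away from $\pm1,\pm1/(d-1)$. (As in the definition of $\cL_G$, here $G$ has no half-loops.)

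First I would discard $e$. Because $d\ge3$ gives $0<1/(d-1)<(d-1)^{-1/2}<1$, there is $\epsilon_0=\epsilon_0(d)>0$ such that for all $\epsilon,\delta\le\epsilon_0$ the closed rectangle bounded by $C^+_{\epsilon,\delta}$ lies in the open right half-plane and misses $0,\pm1,\pm1/(d-1)$. On such a rectangle $e$ is holomorphic, so $\int_{C^+_{\epsilon,\delta}}e(u)\,du=0$, and therefore $\frac{1}{2\pi i}\int_{C^+_{\epsilon,\delta}}\cL_G(u)\,du=\frac{1}{2\pi i}\int_{C^+_{\epsilon,\delta}}\bigl(-\zeta_G'(u)/\zeta_G(u)\bigr)\,du$. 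Since $\zeta_G(u)=\det(I-uH_G)^{-1}=\prod_{\mu}(1-u\mu)^{-1}$ (the product over $\mu\in\Spec(H_G)$ counted with multiplicity) is meromorphic on $\complex$ with no zeros and with a pole of order $\mathrm{mult}(\mu)$ at $u=1/\mu$ for each nonzero $\mu$, the function $-\zeta_G'/\zeta_G$ has residue $+\mathrm{mult}(\mu)$ there; after shrinking $\delta$ so that no reciprocal $1/\mu$ lies on $C^+_{\epsilon,\delta}$ itself (there are finitely many eigenvalues), the residue theorem identifies the integral with the number, counted with multiplicity, of eigenvalues $\mu$ of $H_G$ whose reciprocal lies strictly inside the rectangle.

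It remains to see which eigenvalues these are. By the Ihara factorization, $\Spec(H_G)$ consists of $\pm1$, whose reciprocals $\pm1$ miss the rectangle, together with, for each $\lambda\in\Spec(A_G)$, the two roots $\mu_1,\mu_2$ of $\mu^2-\lambda\mu+(d-1)=0$, with $\mu_1\mu_2=d-1$. If $|\lambda|<2(d-1)^{1/2}$ the roots form a complex-conjugate pair on the circle $|\mu|=(d-1)^{1/2}$, so each $1/\mu_i$ has modulus $(d-1)^{-1/2}$ and nonzero imaginary part, and shrinking $\delta$ below the least such imaginary part over this fixed graph puts all of them outside the rectangle. If $\lambda\in(2(d-1)^{1/2},d)$, that is, $\lambda$ is a positive non-Ramanujan eigenvalue of $A_G$, then $\mu_1\in((d-1)^{1/2},d-1)$ and $\mu_2=(d-1)/\mu_1\in(1,(d-1)^{1/2})$, both real and non-Ramanujan; the $\epsilon$-spectral hypothesis forces $\mu_1<(1+\epsilon)(d-1)^{1/2}$, whence, using $\mu_1\mu_2=d-1$, we get $(1-\epsilon)(d-1)^{-1/2}<1/\mu_1<(d-1)^{-1/2}<1/\mu_2<(1+\epsilon)(d-1)^{-1/2}$, so both reciprocals lie strictly inside the rectangle. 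Every remaining eigenvalue of $H_G$ (the $\pm1$'s and $\pm(d-1)$'s, bounded away from the rectangle for $\epsilon$ small, and, on an $\epsilon$-spectral graph, any real eigenvalues arising from negative non-Ramanujan adjacency eigenvalues, which sit near $-(d-1)^{1/2}$) has its reciprocal outside the rectangle. Hence the integral equals twice the number, with multiplicity, of positive non-Ramanujan eigenvalues of $A_G$, which is exactly the number of positive non-Ramanujan Hashimoto eigenvalues; the negative statement follows identically, with $C^-_{\epsilon,\delta}$ and $-(d-1)^{1/2}$ in place of $C^+_{\epsilon,\delta}$ and $(d-1)^{1/2}$.

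The step to be most careful about is this last identification, and in particular the matching of the two bands: one must use the multiplicative pairing $\mu_1\mu_2=d-1$, not a bound on each $\mu_i$ separately, to guarantee that $\epsilon$-spectrality of $G$ drives \emph{both} $1/\mu_1$ and $1/\mu_2$ into the rectangle. One should also record the harmless genericity hypothesis $\pm2(d-1)^{1/2}\notin\Spec(A_G)$ (equivalently $\pm(d-1)^{1/2}\notin\Spec(H_G)$): without it the reciprocal $\pm(d-1)^{-1/2}$ would lie at the center of the rectangle and the integral would overshoot the non-Ramanujan count by its multiplicity — a value the statement tacitly discards by declaring $\pm(d-1)^{1/2}$ Ramanujan.
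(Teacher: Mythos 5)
Your proof is correct and follows the same route as the paper's (the proposition is explicitly a summary of the preceding discussion: the decomposition $-\zeta_G'/\zeta_G = \cL_G + e$, the observation that $e$ has poles only at $\pm 1, \pm 1/(d-1)$, and the argument principle applied to $\zeta_G$ over $C^+_{\epsilon,\delta}$). You flesh out the steps the paper leaves implicit --- the meromorphic continuation of the identity from $|u|>1$, the careful verification via $\mu_1\mu_2 = d-1$ that both reciprocal poles land strictly inside the rectangle under the $\epsilon$-spectral hypothesis, and the genericity caveat at $\pm 2(d-1)^{1/2}$ --- all of which are sound.
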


We remark that if $G$ is $\epsilon$-spectral for $\epsilon>0$ small, 
then $\Tr(H_G^k)$ is 
the sum of $d^k$ plus $nd-1$ other eigenvalues, all of which
are within the ball $|\mu|\le (d-1)^{1/2}+\epsilon'$ for 
some $\epsilon'$ that tends to zero as $\epsilon$ tends to zero;
hence, for such $G$, the expression for $\cL_G$ in
Definition~\ref{de:calL} has a simple pole of residue $1$ at
$u=1$, and the power series at infinity for
$$
\cL_G(u) - \frac{1}{u-1}
$$
converges for all $|u|^{-1}<(d-1)^{1/2}+\epsilon'$.

\section{The Expected Value of $\cL_G$}
\label{se:expected}

For any even integer, $d$, and integer $n>0$, we define
$\cG_{n,d}$ to the probability space of $d$-regular random graphs
formed by independently choosing $d/2$ permutations, $\pi_1,\ldots,
\pi_{d/2}$ uniformly from the set of 
$n!$ permutations of $\{1,\ldots,\}$; to each such
$\pi_1,\ldots,\pi_{d/2}$
we associate the random graph, $G=G(\{\pi_i\})$, whose vertex set is
$V_G = \{1,\ldots,n\}$, and whose edge set, $E_G$, consists of all
sets
$$
E_G =  \{ \{i,\pi_j(i)\}\ |\ i=1,\ldots,n,\ j=1,\ldots,d/2 \}.
$$
It follows that $G$ may have multiple edges and self-loops.

The following is a corollary of 
\cite{friedman_alon,friedman_kohler}.
\begin{theorem}  
For a $d$-regular graph, $G$, define
define $N_A^+(G)$ to be the number of positive non-Ramanujan adjacency
eigenvalues
of $G$ plus the multiplicity
of $2(d-1)^{-1/2}$ (if any) as an eigenvalue of $G$.
Then for any even $d\ge 10$ and $\epsilon>0$ we have that
$$
\lim_{n\to\infty} \lim_{\delta\to 0}
\expect{G\in\cG_{n,d}}{ N_{A,+}(G) -
\frac{1}{4\pi i} 
\int_{C^+_{\epsilon,\delta}} \cL_G(u)\,du } = 0,
$$
where we interpret the contour integral as its Cauchy principle
for graphs, $G$, whose Zeta function has a pole on $C^+_{\epsilon,\delta}$.
\end{theorem}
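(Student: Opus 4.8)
The plan is to establish the stated identity \emph{deterministically} for almost every $G\in\cG_{n,d}$ and then show that the exceptional graphs contribute nothing in the limit. Fix $\epsilon>0$; we may assume $\epsilon$ is small enough that, for $\delta$ small, the rectangle bounded by $C^{+}_{\epsilon,\delta}$ separates $u=(d-1)^{-1/2}$ from $u=1$ and $u=(d-1)^{-1}$ (this is the only regime of interest; for larger $\epsilon$ one would simply add the explicit residues of the rational function $e(u)$ of Section~\ref{se:zeta}). Choose an auxiliary $\epsilon'=\epsilon'(\epsilon,d)$ with $0<\epsilon'<\epsilon/(1+\epsilon)$. If $G$ is $\epsilon'$-spectral, then every real Hashimoto eigenvalue of $G$ lies in $\{-(d-1),-1,1,d-1\}$ or in the open interval $\bigl((1-\epsilon')(d-1)^{1/2},\,(1+\epsilon')(d-1)^{1/2}\bigr)$, the remaining eigenvalues lying on $|\mu|=(d-1)^{1/2}$; since $\mu\mapsto 1/\mu$ carries that open interval strictly inside the $\epsilon$-window about $(d-1)^{-1/2}$, for $\delta$ small no pole of $\zeta_G$ lies on $C^{+}_{\epsilon,\delta}$, and the poles of $\cL_G$ enclosed by it are exactly the reciprocals of those real Hashimoto eigenvalues, each counted with multiplicity (this is the residue computation of Section~\ref{se:zeta}, since $\cL_G$ and $-\zeta_G'/\zeta_G$ share poles and residues away from $\{\pm1,\pm(d-1)^{-1}\}$). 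By the Ihara determinantal formula \eqref{eq:ihara_det}, those eigenvalues are the reciprocal pairs $\mu_{\pm}$ produced by each adjacency eigenvalue $\lambda\in(2(d-1)^{1/2},d)$ — two non-Ramanujan $H_G$-eigenvalues per non-Ramanujan positive $A_G$-eigenvalue — together with the value $(d-1)^{1/2}$ itself, occurring with multiplicity $2m$ exactly when $2(d-1)^{1/2}$ is an $A_G$-eigenvalue of multiplicity $m$. Summing residues and dividing by $2$ gives
\[
\frac{1}{4\pi i}\int_{C^{+}_{\epsilon,\delta}}\cL_G(u)\,du=N_{A,+}(G)
\]
for every $\epsilon'$-spectral $G$ and every small $\delta>0$: this is exactly the Proposition of Section~\ref{se:zeta} combined with the two-to-one correspondence between non-Ramanujan $H_G$- and $A_G$-eigenvalues recalled there, with the boundary value $2(d-1)^{1/2}$ accounted for separately (as in the definition of $N_{A,+}$).

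Since $\cG_{n,d}$ is a finite probability space, $\lim_{\delta\to0}$ commutes with $\expect{G\in\cG_{n,d}}{\,\cdot\,}$, and by the identity above the integrand of the theorem vanishes identically on the $\epsilon'$-spectral graphs. It therefore remains to estimate
\[
R_n:=\expect{G\in\cG_{n,d}}{\Bigl(N_{A,+}(G)-\tfrac{1}{4\pi i}\!\int_{C^{+}_{\epsilon,0^{+}}}\!\!\cL_G\Bigr)\mathbf{1}_{\{G\text{ not }\epsilon'\text{-spectral}\}}},
\]
where $\int_{C^{+}_{\epsilon,0^{+}}}$ denotes the $\delta\to0$ (Cauchy principal value) limit. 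For \emph{every} $d$-regular $G$ on $n$ vertices one has the crude bounds $0\le N_{A,+}(G)\le n$ and $\bigl|\tfrac{1}{4\pi i}\int_{C^{+}_{\epsilon,0^{+}}}\cL_G\bigr|\le\tfrac12|\Edir_G|=\tfrac{nd}{2}$, the latter because the integral counts, with non-negative multiplicities and principal-value weights, a subset of the eigenvalues of $H_G$. Hence $|R_n|\le\bigl(n+\tfrac{nd}{2}\bigr)\,\Prob_{G\in\cG_{n,d}}\bigl[G\text{ not }\epsilon'\text{-spectral}\bigr]$.

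Finally, a graph that is not $\epsilon'$-spectral must satisfy $\rho(G)>2(d-1)^{1/2}+\epsilon''$ for a suitable $\epsilon''=\epsilon''(\epsilon',d)>0$ (a real Hashimoto eigenvalue outside the allowed set forces, via \eqref{eq:ihara_det}, an adjacency eigenvalue $\lambda$ with $2(d-1)^{1/2}<|\lambda|<d$ bounded away from $2(d-1)^{1/2}$), and by the quantitative solution of Alon's conjecture \cite{friedman_alon} together with its tangle refinements \cite{friedman_kohler} this event has probability $O\bigl(n^{-\tau(d)}\bigr)$, where the controlled exponent is $\tau(d)=\lfloor(\sqrt{d-1}+1)/2\rfloor$. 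This is where the hypothesis is used: $d\ge 10$ is exactly the condition $\tau(d)\ge 2$, so that $|R_n|=O\bigl(n\cdot n^{-\tau(d)}\bigr)=O\bigl(n^{1-\tau(d)}\bigr)\to0$. Together with the first paragraph, $\lim_{n\to\infty}\lim_{\delta\to0}\expect{G\in\cG_{n,d}}{N_{A,+}(G)-\tfrac{1}{4\pi i}\int_{C^{+}_{\epsilon,\delta}}\cL_G}=\lim_{n\to\infty}R_n=0$.

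The main obstacle is this last step: the purely linear a priori bound $|R_n|=O(n)\,\Prob[\text{bad}]$ is \emph{not} killed by the bare $O(1/n)$ bound from \cite{friedman_alon} on the fraction of non-$\epsilon'$-spectral graphs, so one genuinely needs the polynomial tail estimate with exponent growing in $d$, and $d\ge 10$ is the threshold making that exponent at least $2$. One could instead trade $d\ge10$ for moment control: a uniform bound $\expect{G\in\cG_{n,d}}{N_{A,+}(G)^2}=O(1)$ — and an analogous one for the number of real $H_G$-eigenvalues near $(d-1)^{1/2}$ — which ought to follow from a sharper tangle analysis, would give $|R_n|=O(n^{-1/2})$ via Cauchy--Schwarz from only the $O(1/n)$ fraction, removing the restriction $d\ge 10$. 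Everything in the first paragraph, by contrast, is routine residue bookkeeping built on the Proposition of Section~\ref{se:zeta} and the Ihara formula \eqref{eq:ihara_det}.
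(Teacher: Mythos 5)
Your proof is correct and follows essentially the same route as the paper's (very terse) argument: establish that the integral identity holds deterministically on the overwhelming majority of graphs and dispatch the exceptional set by the polynomial tail bound of \cite{friedman_alon,friedman_kohler}, which for $d\ge 10$ gives decay $O(n^{-2})$ and thus beats the crude $O(n)$ a priori bound on the integrand. The paper compresses the deterministic step into a citation of its Section~\ref{se:zeta} Proposition; you have unwound that into the explicit $\epsilon'$-spectral residue bookkeeping, which is worth having written out but is not a different method.
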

\begin{proof}
For $d\ge 10$ we know that the probability that a graph
has any such eigenvalues is at most $O(1/n^2)$
(see \cite{friedman_alon} for $d\ge 12$, and
\cite{friedman_kohler} for $d=10$), and hence this expected
number is at most $1/n$.  
\end{proof}
For $d\ge 4$ one might conjecture the same theorem holds, 
although this does not seem to follow
literally from \cite{friedman_alon,friedman_kohler};
however, if one conditions on $G\in\cG_{n,d}$ 
not having a $(d,\epsilon')$-tangle
(in the sense of \cite{friedman_kohler}), which is an order
$O(1/n)$ probability event, then we get equality.
The problem is that one does not know where most of the eigenvalues
lie in graphs that have tangles; we conjecture that graphs with 
tangles will not give more than an $O(1/n)$ expected
number of eigenvalues strictly
between $2(d-1)^{1/2}+ \epsilon$ and $d$; hence we conjecture that
the above above theorem remains true for all $d\ge 4$.

We remark that for any $d$-regular graph, we have that
$$
\lim_{\delta\to 0}
\frac{1}{4\pi i} 
\int_{C^+_{\epsilon,\delta}} \cL_G(u)\,du  =
N_{A,+,\epsilon}(G),
$$
where $N_{A,+,\epsilon}$ counts the number of positive,
real Hashimoto eigenvalues, $\mu$, such that
$$
|1-\mu(d-1)^{1/2}|\le \epsilon.
$$
Hence for any $d,n,\epsilon$ we have
$$
\lim_{\delta\to 0}
\expect{G\in\cG_{n,d}}{ 
\frac{1}{4\pi i} 
\int_{C^+_{\epsilon,\delta}} \cL_G(u)\,du } =
\expect{G\in\cG_{n,d}}{N_{A,+,\epsilon}(G)}.
$$

Now we wish to conjecture a value for
\begin{equation}\label{eq:magic}
\lim_{\delta\to 0}\expect{G\in\cG_{n,d}}{
\frac{1}{4\pi i} 
\int_{C^+_{\epsilon,\delta}} \cL_G(u)\,du
} .
\end{equation}
We now seek to use trace methods to
in order to conjecture what the value of \eqref{eq:magic}
will be for fixed $d$ and
$n\to\infty$.

It is known that for $d,r$ fixed and $n$ large, we have that 
\begin{equation}\label{eq:Pdefined}
\expect{G\in\cG{n,d}}{\Tr(H_G^k)} =
P_0(k) + P_1(k) n^{-1} + \ldots + P_{r-1}(k)n^{1-r} + {\rm err}_r(n,k),
\end{equation}
where $P_i(k)$ are functions of $k$ alone, and
$$
|{\rm err}_r(n,k)| \le C_r k^{2r}(d-1)^k n^{-r}.
$$
Furthermore, $P_0(k)$ is known (see \cite{friedman_random_graphs,
friedman_alon,friedman_kohler}, but essentially since \cite{broder}) to equal
$$
P_0(k) 
= O(kd) + \sum_{k'|k} (d-1)^{k'},
$$
where $k'|k$ means that $k'$ is a positive integer dividing $k$;
in particular, we have
\begin{equation}\label{eq:Pzero}
P_0(k) = (d-1)^k + \II_{{\rm even}}(k) (d-1)^{k/2} + O(d-1)^{k/3},
\end{equation}
where $\II_{{\rm even}}(k)$ is the indicator function of $k$ being even.
Furthermore, we believe
the methods of \cite{friedman_random_graphs,friedman_kohler} 
will show that for each $i$ we have
\begin{equation}\label{eq:cPdefined}
\cP_i(u) = \sum_{k=0}^\infty u^{-1-k} P_i(k) (d-1)^{-k}
\end{equation}
is meromorphic with finitely many poles outside any disc about the
origin of radius strictly greater than $1/(d-1)$; our idea is to
use the ``mod-$S$'' function approach of \cite{friedman_kohler} 
to show that each $P_i(k)$ is a polyexponential plus an error term
(see \cite{friedman_kohler}) and
to argue for each ``type'' separately, although we have not written
and checked this carefully as of the writing of this article; hence this 
belief may be 
regarded as a (plausible) conjecture at present.
Let give some conjectures based on the above 
assumption, and the (more speculative) assumption
that we can evaluate the above asymptotic expansion
by taking expected values term by term, and the (far more speculative)
assumption that the $\cP_i(u)$, formed by summing over arbitrarily
large $k$, reflect the properties of $\cG_{n,d}$ with $n$ fixed.

\begin{definition} For any even $d\ge 4$, set
$$
N_i = 
\frac{1}{2\pi i}
\int_{C^+_{\epsilon,\delta}}  \cP_i(u)\,du ,
$$
where we assume the $\cP_i(u)$, given in \eqref{eq:cPdefined}, based on
functions $P_i(k)$ given in \eqref{eq:Pdefined}, are meromorphic functions,
at least for $|u|$ near $(d-1)^{-1/2}$.
We say that $\cG_{\;\cdot\;,d}$ is {\em positively approximable to order $r$}
if
for some $\epsilon>0$ we have
$$
\lim_{\delta\to 0}
\frac{1}{2\pi i}
\int_{C^+_{\epsilon,\delta(n)}} 
\expect{G\in\cG_{n,d}}{ \cL_G(u) }\,du 
= N_0 + N_1 n^{-1} + N_2 n^{-2} + \cdots + N_r n^{-r} + o\bigl(n^{-r}\bigr)
$$
as $n$ tends to infinitly; we 
similarly define {\em negativly approximable} with
negative real eigenvalues and $C^-_{\epsilon,\delta}$.
\end{definition}

We now state a number of conjectures, which are successively weaker.

\begin{conjecture}  For any even $d\ge 4$ we have $\cG_{\;\cdot\;,d}$ is
\begin{enumerate}
\item positively approximable to any order;
\item positively approximable to order $r(d)$, where $r(d)\ge 0$, and
$r(d)\to\infty$ as $d\to\infty$;
\item positively approximable to order $0$; and
\item positively approximable to order $0$ for $d$ sufficiently large;
\end{enumerate}
and similarly with ``postively'' replaced with ``negatively.''
\end{conjecture}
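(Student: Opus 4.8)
The plan is to split the assertion into two largely independent inputs and to attack the weakest cases, (3) and (4), first. Input (i): describe the analytic structure of the generating functions $\cP_i$ near $u=(d-1)^{-1/2}$ and compute the numbers $N_i$. Input (ii): show that the rational function $\expect{G\in\cG_{n,d}}{\cL_G(u)}$, whose contour integral over $C^+_{\epsilon,\delta}$ we must evaluate, is approximated on a neighbourhood of $C^+_{\epsilon,\delta}$ by the formal sum $\cP_0(u)+\cP_1(u)n^{-1}+\cdots$ with an error that is $o(n^{-r})$ after integration. For (3) and (4) one only needs $\cP_0$, which \eqref{eq:Pzero} already gives in closed form: $\cP_0(u)=\tfrac{1}{u-1}+\tfrac{u}{u^2-(d-1)^{-1}}+h(u)$ with $h$ holomorphic on $|u|>(d-1)^{-2/3}$, so for $\epsilon$ small the only singularity of $\cP_0$ inside $C^+_{\epsilon,\delta}$ is the simple pole at $(d-1)^{-1/2}$ with residue $\tfrac12$, giving $N_0=\tfrac12$; equivalently, the order-$0$ conjecture asserts that the limiting expected number of adjacency eigenvalues poking above the spectral edge $2(d-1)^{1/2}$ equals $\tfrac14$.

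For input (i) in the generality needed for (1) and (2) I would run the ``mod-$S$'' analysis of \cite{friedman_kohler} on each $P_i(k)$: the goal is to show that $P_i(k)$ is a finite sum of ``polyexponential'' terms $q(k)\nu^{k}$, with $q$ a polynomial and $\nu$ an algebraic number of modulus at most $(d-1)^{1/2}$ arising from an explicit finite family of quotient graphs, plus an error that is $O\bigl(k^{C}(d-1)^{\beta k}\bigr)$ for some $\beta<1/2$. Summing $u^{-1-k}(d-1)^{-k}q(k)\nu^{k}$ over $k$ turns each polyexponential into an explicit rational function whose only pole is at $u=\nu/(d-1)$; hence $\cP_i$ is meromorphic near $(d-1)^{-1/2}$, and by the identity $\sum_{k\ge0}u^{-1-k}\binom{k}{m}\rho^{k}=\rho^{m}(u-\rho)^{-m-1}$ (whose residue at $u=\rho$ is $1$ for $m=0$ and $0$ otherwise) the contribution of each type to $N_i$ is just the value $q(0)$, summed over the types with $\nu$ inside the window, so $N_i$ reduces to a finite combinatorial count. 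One expects, though this must be checked, that for $i\ge1$ the relevant types keep $\cP_i$ regular at $(d-1)^{-1/2}$, so that $N_i=0$ and the order-$r$ statement reads $\tfrac12+o(n^{-r})$.

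For input (ii) I would split $\expect{\cL_G(u)}=\sum_{k\le K}u^{-1-k}(d-1)^{-k}\expect{\Tr(H_G^k)}+(\text{tail})$ with $K=K(n)$ a slowly growing cutoff, apply \eqref{eq:Pdefined} to the head and match it term by term with the $K$-truncations of $\sum_i\cP_i n^{-i}$ (using the geometric decay from (i) to control the truncation tails), and estimate the $\cL_G$-tail on $C^+_{\epsilon,\delta}$ using that a $1-O(1/n)$ fraction of $G\in\cG_{n,d}$ is $\epsilon$-spectral \cite{friedman_alon}, so that for such $G$ the tail is holomorphic and uniformly small near $C^+_{\epsilon,\delta}$, while the exceptional tangled graphs are absorbed via the trivial bound $\Tr(H_G^k)\le|V_G|\,d\,(d-1)^{k-1}$ together with their $O(1/n)$ probability. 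Combining the two inputs and computing residues yields positive approximability; the negative case is identical with $C^-_{\epsilon,\delta}$.

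The hard part, and the reason this is only a conjecture, is precisely the tail estimate near $C^+_{\epsilon,\delta}$. Since $C^+_{\epsilon,\delta}$ straddles the circle $|u|=(d-1)^{-1/2}$, and for fixed $d$ and $n\to\infty$ the rational function $\expect{\cL_G(u)}$ acquires poles that fill that circle densely (the reciprocals of the complex Hashimoto eigenvalues, whose adjacency counterparts equidistribute in $[-2(d-1)^{1/2},2(d-1)^{1/2}]$ by the Kesten--McKay law \cite{mckay}), the head cutoff $\sum_{k\le K}u^{-1-k}(d-1)^{-k}\expect{\Tr(H_G^k)}$ cannot be pushed onto $C^+_{\epsilon,\delta}$ with an $o(n^{-r})$ error unless one controls $\expect{\Tr(H_G^k)}$ for $k$ as large as a small power of $n$, i.e.\ strictly beyond the fixed-$r$ bound of \eqref{eq:Pdefined}. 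Equivalently, $\lim_{\delta\to0}$ of the integral of $\expect{\cL_G}$ isolates exactly the real Hashimoto eigenvalues, i.e.\ twice the expected number of adjacency eigenvalues strictly exceeding $2(d-1)^{1/2}$; so even the order-$0$ case (4) amounts to proving that this ``number of spectral-edge outliers'' has a limiting expectation and that the limit is the value $\tfrac14$ produced by the formal calculation rather than some nearby constant fixed by soft-edge fluctuation statistics (cf.\ \cite{miller_novikoff}). This is a quantitatively much sharper version of the kind of spectral-edge question settled in \cite{friedman_alon}, and I expect it to be the genuine obstacle: without a new input of this type the plan at best establishes the weakest assertions.
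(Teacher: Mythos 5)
The statement you were asked to prove is a \emph{conjecture} in the paper, and the paper offers no proof of it; it only gives the formal calculation that motivates the conjecture (the residue computation yielding $N_0=1/2$, via $\cP_0(u)=\frac{1}{u-1}+\frac{u}{u^2-(d-1)^{-1}}+h(u)$ with $h$ holomorphic on $|u|>(d-1)^{-2/3}$) and then explicitly lists the ``leaps of faith'' that prevent it from being a theorem. Your sketch reproduces this formal calculation correctly (and even fixes the paper's typo $(d-1)$ vs.\ $(d-1)^{-1}$ in the denominator), correctly proposes the ``mod-$S$'' polyexponential analysis of \cite{friedman_kohler} for the higher $\cP_i$, and --- most importantly --- correctly names the genuine obstruction: the contour $C^+_{\epsilon,\delta}$ straddles the circle $|u|=(d-1)^{-1/2}$, on which the poles of $\expect{G\in\cG_{n,d}}{\cL_G(u)}$ accumulate densely as $n\to\infty$ (Kesten--McKay), so the fixed-$r$ trace expansion \eqref{eq:Pdefined}, which controls only $k=O(1)$ as $n\to\infty$, cannot by itself justify the term-by-term summation over all $k$. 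This is exactly the obstruction the paper points to when it says the formal summation ``cannot be regarded as anything but a formal summation.''

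Two smaller remarks. First, your expectation that $N_i=0$ for $i\ge 1$ is not claimed in the paper; the paper's remark that $\cG_{\;\cdot\;,d}$ exhibits order-$1$ tangles for $d\le 8$ suggests the contrary, which is why the paper distinguishes cases (3) and (4) and ties $r(d)$ to $d$ in case (2). Second, your ``input (ii)'' plan of splitting the sum at a cutoff $K(n)$ and bounding the tail using $\epsilon$-spectrality plus the trivial $\Tr(H_G^k)\le |V_G|\,d(d-1)^{k-1}$ bound on the $O(1/n)$-probability exceptional set will not give $o(n^{-r})$ for any $r\ge 1$, because the exceptional-set contribution is already of order $1/n$ times a quantity that diverges with $K$; you would need the refined tangle-conditioning of \cite{friedman_alon,friedman_kohler} rather than a crude union bound. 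So your proposal, as you yourself acknowledge, does not constitute a proof of any of (1)--(4); it is an accurate restatement of the paper's heuristic together with an accurate diagnosis of why it remains open. That is a fair reading of the situation, but it is not a proof, and the paper does not contain one either.
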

For the above conjecture, we note that $\cG_{\;\cdot\;,d}$ exhibits
$(d,\epsilon')$-tangles of order $1$ 
(see \cite{friedman_alon,friedman_kohler})
for $\epsilon'>0$ and $d\le 8$; for such reasons, we believe
that there may be a difference between small and large $d$.

Of course, the intriguing part of this conjecture is the calculation 
taking \eqref{eq:Pzero} to show
that
$$
\cP_0(u)=\sum_{k=1}^\infty P_0(k)u^{-k-1} = \frac{u^{-1}}{1-u^{-1}}
+ \frac{u^{-1}}{1-(d-1)^{-1}u^{-2}} + h(u)
$$
$$
= \frac{1}{u-1}+\frac{u}{u^2-(d-1)} + h(u),
$$
where $h(u)$ is holomorphic in $|u|>(d-1)^{-2/3}$.  
It follows that
$$
N_0 = 
\frac{1}{2\pi i}
\int_{C^+_{\epsilon,\delta}}  \cP_0(u)\,du =
1/2 ;
$$
similarly with ``positive'' replaced with ``negative.''
Hence the above conjecture would imply that the expected number of
positive, non-Ramanujan adjacency eigenvalues for a graph in
$\cG{n,d}$, with $d$ fixed and $n$ large, would tend to $1/4$.
This establishes a main point of interest.

\begin{proposition} Let $d\ge 4$ be an even integer for which
$\cG_{\;\cdot\;,d}$ is positively approximable to order $0$.  
Then,
as $n\to\infty$, the limit supremum of
the expected of positive, non-Ramanujan Hashimoto eigenvalues of
$G\in\cG_{n,d}$ is at most $1/2$; similarly, the same for non-Ramanujan
adjacency eigenvalues is at most $1/4$.
The same holds with ``positive(ly)'' replaced everywhere with
``negative(ly).''
\end{proposition}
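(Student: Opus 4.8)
The plan is to deduce the statement from the hypothesis together with the structural material already assembled, reducing it to one pointwise inequality on ``typical'' graphs and one tail estimate. To begin, two reductions. Since, by the Ihara formula, the positive non-Ramanujan eigenvalues of $H_G$ are exactly the pairs of real roots $\mu_1>(d-1)^{1/2}>\mu_2$ of $\mu^2-\lambda\mu+(d-1)=0$ attached to adjacency eigenvalues $\lambda\in(2(d-1)^{1/2},d)$, the number of positive non-Ramanujan Hashimoto eigenvalues of $G$ is exactly twice the number of positive non-Ramanujan adjacency eigenvalues of $G$; so the bounds $1/2$ and $1/4$ are equivalent, and I prove the Hashimoto one. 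And since $\cG_{n,d}$ is a \emph{finite} probability space, for the $\epsilon>0$ witnessing ``positively approximable to order $0$'' one may freely interchange $\expect{G\in\cG_{n,d}}{\cdot}$ with $\lim_{\delta\to0}$ and with $\int_{C^+_{\epsilon,\delta}}$; hence the hypothesis says precisely that
\[
\lim_{n\to\infty}\expect{G\in\cG_{n,d}}{m_\epsilon(G)}=N_0=\tfrac12,\qquad
m_\epsilon(G):=\lim_{\delta\to0}\frac{1}{2\pi i}\int_{C^+_{\epsilon,\delta}}\cL_G(u)\,du .
\]
Because $\cL_G$ and $-\zeta_G'/\zeta_G$ differ only by the rational function $e(u)$, whose poles $\pm1,\pm1/(d-1)$ lie outside $C^+_{\epsilon,\delta}$ once $\epsilon$ is small, a residue count (using the Ihara formula as in the discussion before Definition~\ref{de:calL}) identifies $m_\epsilon(G)$ with the number, counted with multiplicity, of real Hashimoto eigenvalues $\mu$ of $G$ with $1/\mu$ in the interval of \eqref{eq:Cplus}, i.e.\ of real $\mu$ in a fixed closed interval $I_\epsilon\subset(1,d-1)$ around $(d-1)^{1/2}$; in particular $m_\epsilon(G)\ge0$ for all $G$.

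For the pointwise step, recall that a $1-O(1/n)$ fraction of $G\in\cG_{n,d}$ are $\epsilon'$-spectral \cite{friedman_alon}, and that after shrinking $\epsilon'$ suitably relative to $\epsilon$ the $\epsilon'$-spectral band is contained in $I_\epsilon$. For $\epsilon'$-spectral $G$ every real Hashimoto eigenvalue other than $\pm1,\pm(d-1)$ lies in $I_\epsilon$, hence every positive non-Ramanujan Hashimoto eigenvalue of $G$ is among the eigenvalues counted by $m_\epsilon(G)$; since $m_\epsilon(G)$ also counts the (non-negative) multiplicity of $(d-1)^{1/2}$ itself, we obtain
\[
\#\{\text{positive non-Ramanujan Hashimoto eigenvalues of }G\}\ \le\ m_\epsilon(G)\qquad(G\ \epsilon'\text{-spectral}).
\]
Splitting $\expect{G\in\cG_{n,d}}{\cdot}$ over the $\epsilon'$-spectral event and its complement and using $m_\epsilon\ge0$ there,
\[
\expect{G\in\cG_{n,d}}{\#\{\text{pos.\ non-Ram.\ Hashimoto eig.\ of }G\}}\ \le\ \expect{G\in\cG_{n,d}}{m_\epsilon(G)}+E_n ,
\]
where $E_n$ is the contribution of the non-$\epsilon'$-spectral graphs. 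The first term tends to $1/2$, so the Proposition follows once $\limsup_{n\to\infty}E_n=0$, and the adjacency bound $1/4$ is then immediate.

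Controlling $E_n$ is the crux and the one delicate point. The crude estimate $\#\{\text{pos.\ non-Ram.\ Hashimoto eig.}\}\le2(|V_G|-1)$ together with $\Prob[\,G\text{ not }\epsilon'\text{-spectral}\,]=O(1/n)$ only gives $E_n=O(1)$, so one must bound the \emph{expected number} of large real Hashimoto eigenvalues, not merely the probability of having one. For $d\ge10$ this is immediate, since $\Prob_{G\in\cG_{n,d}}[\,G\text{ not }\epsilon'\text{-spectral}\,]=O(1/n^2)$ by \cite{friedman_alon,friedman_kohler}, whence $E_n=O(1/n)$. For $d\in\{4,6,8\}$ I would attempt a trace argument: from $\Tr(H_G^{2k})=\sum_i\mu_i(G)^{2k}$, the fact that every complex Hashimoto eigenvalue of a $d$-regular graph has modulus exactly $(d-1)^{1/2}$, and $\mu_1(G)=d-1$, one gets, for all $G$ and $k$,
\[
\#\{\text{non-Ram.\ real }\mu_i(G)\text{ with }|\mu_i(G)|\ge(d-1)^{1/2}(1+\epsilon'')\}\ \le\ \frac{\Tr(H_G^{2k})-(d-1)^{2k}+2|V_G|(d-1)^{k}}{(d-1)^{k}(1+\epsilon'')^{2k}},
\]
then takes expectations, substitutes \eqref{eq:Pdefined} and \eqref{eq:Pzero}, and tries to choose $k\asymp\log n$ and the truncation order $r$ large so that the right-hand side is $o(1)$. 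This is exactly where tangles intervene: absorbing the term $2|V_G|(d-1)^{k}$, which comes from the $\Theta(n)$ complex eigenvalues of modulus $(d-1)^{1/2}$, forces $(1+\epsilon'')^{2k}\gg n$, hence $k$ so large that the corrections $P_i(2k)n^{-i}$ with $i\ge1$ are no longer negligible, because for $d\le8$ the order-$1$ tangles of \cite{friedman_alon,friedman_kohler} make $P_i(k)$ grow like $(d-1)^{k}$. Thus for $d\le8$ this last step succeeds only on the strength of the further (believed, and consistent with \cite{friedman_alon,friedman_kohler}) statement that graphs carrying tangles contribute only an $O(1/n)$ expected number of adjacency eigenvalues in $(2(d-1)^{1/2}+\epsilon'',d)$; granting it, $E_n=o(1)$ for all even $d\ge4$ for which $\cG_{\;\cdot\;,d}$ is positively approximable to order $0$, and the Proposition follows. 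The ``negatively'' assertions are proved identically, replacing $C^+_{\epsilon,\delta}$ and $(d-1)^{1/2}$ throughout by $C^-_{\epsilon,\delta}$ and $-(d-1)^{1/2}$.
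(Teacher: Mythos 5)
The paper states this Proposition without a written proof, but the Theorem and remarks directly preceding it make the intended argument clear, and it is essentially yours: after the (valid, since $\cG_{n,d}$ is a finite probability space) interchange of $\expect{G\in\cG_{n,d}}{\cdot}$ with the contour integral and the $\delta\to0$ limit, the order-$0$ approximability hypothesis says that the expected number of real Hashimoto eigenvalues lying in the $\epsilon$-window about $(d-1)^{1/2}$ tends to $N_0=1/2$, one then notes that this window count dominates the number of positive non-Ramanujan Hashimoto eigenvalues whenever $G$ is $\epsilon$-spectral (and is otherwise still nonnegative), and what remains is to control the tail contribution from non-$\epsilon$-spectral graphs. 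You correctly identify this as the crux. For $d\ge 10$ the bound $\Prob[\text{not }\epsilon'\text{-spectral}]=O(1/n^2)$ of \cite{friedman_alon,friedman_kohler}, together with the trivial $O(n)$ bound on the number of non-Ramanujan eigenvalues, gives $E_n=O(1/n)$; this is exactly the content of the Theorem stated just above the Proposition, and is the paper's argument.

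For $4\le d\le 8$ the paper has only $\Prob[\text{not }\epsilon'\text{-spectral}]=O(1/n)$, explicitly remarks that ``one does not know where most of the eigenvalues lie in graphs that have tangles,'' and leaves the analogous Theorem as a conjecture. Thus the Proposition, stated for all even $d\ge 4$, is in the paper implicitly conditional on precisely the tail estimate you formulate at the end; you have not found a way around it, but you have usefully made explicit an assumption the paper leaves tacit. Your diagnosis of why the direct trace bound cannot close the gap for small $d$ is also accurate: absorbing the $\Theta(n)(d-1)^k$ contribution of the $\Theta(n)$ complex Hashimoto eigenvalues of modulus $(d-1)^{1/2}$ forces $k\gtrsim\log n$, beyond the regime in which \eqref{eq:Pdefined} is controlled when order-one tangles make the $P_i(k)$ grow like $(d-1)^k$. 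One small slip in the residue step: the poles of $\cL_G$ lie at $u=\mu/(d-1)$ (with residue equal to the multiplicity of $\mu$), not at $u=1/\mu$; the latter are the poles of $-\zeta_G'/\zeta_G$. Since a pair $\mu_1\mu_2=d-1$ satisfies $1/\mu_1=\mu_2/(d-1)$, the two pole sets coincide and your interval $I_\epsilon\subset(1,d-1)$ is the same, so the conclusion is unaffected; but the phrase ``$1/\mu$ in the interval of \eqref{eq:Cplus}'' is, strictly, describing $-\zeta_G'/\zeta_G$ rather than $\cL_G$.
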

The reason we involve the limit supremum in the above is that it 
is conceivable (although quite unlikely in our opinion) that there
is a positive expected multiplicity of the eigenvalue $(d-1)^{1/2}$
in $H_G$ for $G\in\cG_{n,d}$.

%

We finish this secion with a few remarks considering the above conjectures.

In the usual trace methods
one estimates the expected value of $A_G^k$ or $H_G^k$ for $k$
of size proportional to $\log n$;
furthermore, the contributions to
$P_0(k)$ consist of ``single loops''
(see \cite{friedman_alon,friedman_kohler}), which cannot occur
unless $k\le n$.  Hence, the idea of fixing $n$ and formally summing
in $k$ cannot be regarded as anything but a formal summation.

We also note that for any positive integer, $m$,
$\cP_0(u)$ has poles at $(d-1)^{-(m-1)/m}\omega_m$,
where $\omega_m$ is
any $m$-th root of unity; hence this
function does not resemble $\zeta_G(u)$ for a fixed $d$-regular graph
$G$, whose poles are confined to the reals and the complex circle
$|u|=(d-1)^{-1/2}$; hence if $\cP_0(u)$ truly reflects some average
property of $\zeta_G(u)$ for $G\in\cG_{n,d}$ everwhere in $|u|>(d-1)^{-1}$,
then there is some averaging effect that makes $\cP_0(u)$ different
that the typical $\zeta_G(u)$.

\ignore{
dddddddd

provided that $\delta=\delta(n)>0$ is sufficiently small.
EXPLAIN THIS MORE PRECISELY.

provided that $\delta>0$ is sufficiently small.
EXPLAIN THIS MORE PRECISELY.

ddddd

We now make the intriguing computation:
$$
\sum_{k=1}^\infty P_0(k)u^{-k-1} = \frac{u^{-1}}{1-u^{-1}}
+ \frac{u^{-1}}{1-(d-1)^{-1}u^{-2}} + h(u)
$$
$$
= \frac{1}{u-1}+\frac{u}{u^2-(d-1)} + h(u),
$$
where $h(u)$ is holomorphic in $|u|>(d-1)^{-2/3}$.  

It follows that
for $\delta$ and $\epsilon$ sufficiently small we have
$$
\frac{1}{2\pi i}
\int_{C^+_{\epsilon,\delta}} \left( \sum_{k=1}^\infty P_0(k)u^{-1-k}
\right)\;du
$$
$$
=
{\rm Residue}\left( \frac{u}{u^2-(d-1)^{-1}}
\right)\biggm|_{z=(d-1)^{-1/2}} 
$$
$$
\frac{(d-1)^{-1/2}}{2(d-1)^{-1/2}} = 1/2. 
$$

\begin{conjecture} For fixed, even integer $d\ge 4$, we have
that the expected number of positive, real Hashimoto eigenvalues
strictly between $1$ and $d-1$ is $1/2$; similarly for negative
such eigenvalues.
\end{conjecture}

ddddd

It is known that the number of non-Ramanujan eigenvalues of $H_G$ is
precisely twice the number of eigenvalues of $A_d$ that are
if it has any,
are $\pm(d-1)^{1/2}$.

Recall that the

We set $\Pi$ to be the alphabet
$$
\Pi = \{ \pi_1,\pi_1^{-1},\ldots,\pi_{d/2},\pi_{d/2}^{-1} \} ,
$$
and often identify $\pi_i$ with the $i$-th random permutation of
an element in $\cG_{n,d}$, and $\pi_i^{-1}$ with the inverse
permutation.  We say that a word of length $k$,
$\sigma_1\ldots\sigma_k$ over the alphabet $\Pi$ (i.e., the
$\sigma_i$'s
are elements of $\Pi$) is
{\em non-backtracking} or {\em reduced} if $\sigma_i\ne\sigma_{i+1}$
for all $i=1,\ldots,k-1$, and {\em strongly non-backtracking} if it
is non-backtracking and $\sigma_k\ne\sigma_1^{-1}$.
The {\em adjacency matrix}, $A_G$, of $G$ is the square matrix indexed on
$V_G$, whose $(i_1,i_2)$ entry counts the number of letters
$\sigma\in\Pi$ for which $i_2=\sigma(i_1)$, i.e., the number of $j$ for
which $\pi_j(i_1)=i_2$ plus the number of $j$ for which
$\pi_j(i_2)=i_1$.  It follows that $A_G$ is symmetric, each row sum
and each column sum equals $d$, and each diagonal entry of $A_G$
is even.
We set $\Pi$ to be the alphabet.

The {\em Hashimoto matrix}, $H_G$, of $G$ is the matrix indexed on the
$nd$ pairs $(i,j)$ with 

$i_1$ and $i_2$ distinct elements of
$V_G$, with
$$
H_G\bigl (  (i_1,i_2), (i_3
$$
The adjacency matrix, 
Let us give an informal overview of our proof; we assume certain
notions in algebraic graph theory, some not entirely standard,
to be made precise in Section~\ref{se:precise}.

For any graph, $G$, 
let $A_G$ be its adjacency matrix, and 
let
$H_G$ be its Hashimoto matrix, i.e., the adjacency matrix of what is
commonly called $G$'s {\em oriented line graph}, $\Line(G)$.
If $\zeta_G(z)$ is the Zeta function
of $G$, then we have
$$
\zeta_G(z) = \frac{1}{\det(I-z H_G)},
$$
which we may alternatively write via the Ihara determinantal formula
\begin{equation}\label{eq:ihara_det}
\det(I-zH_G) =
\det\Bigl(I-z A_G- z^2(d-1)\Bigr)(1-z^2)^{-\chi(G)}
\end{equation}
(provided $G$ has no half-loops, with a simple modification if
$G$ does).
We call an eigenvalue of $H_G$ {\em non-Ramanujan} if it is purely
real and different from $\pm 1$ and $\pm (d-1)$;
$G$ is called {\em Ramanujan} if its only non-Ramanujan eigenvalues,
if it has any,
are $\pm(d-1)^{1/2}$.

We wish to estimate the number of Ramanujan graphs in various classes
of $d$-regular, random graphs.
Let $\cS_n$ denote the symmetric group of permutations of
$\{1,\ldots,n\}$.
For simplicity, 
consider the model, $\cG_{n,d}$, of a random graph on $n$ vertices, for
even $d\ge 4$, 
formed from $d/2$ independently chosen elements of $\cS_n$, each
chosen uniformly.

For $\alpha,\beta>0$ consider the rectangle
$$
R_+ = R_+(\alpha,\beta)
= \Bigl\{ x + iy \ \Bigm| \ |1-x(d-1)^{1/2}\bigr| \le \alpha, \ |y|\le
\beta \Bigr\} ,
$$
and
$$
R_- =R_-(\alpha,\beta) = 
-R_+
= \{ -z \ |\ z\in R_+ \} ,
$$
and let $C_+$ and $C_-$,
respectively, be the counterclockwise
contours bounding $R_+$ and $R_-$, respectively.
Assuming that $H_G$ has all its non-Ramanujan eigenvalues of absolute value
less than $(d-1)^{1/2}(1+\alpha)$, then for sufficiently small $\beta>0$
we have that 
$$
N_+(G) = 
\frac{1}{2\pi i} 
\oint_{C_+} 
\frac{-\zeta_G'(z)}{\zeta_G(z)} \, dz
$$
is the number (counted with any multiplicity) 
of positive real subdominant eigenvalues of $H_G$;
similarly for $N_-(G)$, which counts the negative real 
eigenvalues of $H_G$.

\begin{definition}
For any graph, $G$, we define its {\em Xi function}, $\xi_G(z)$, to
be the function
\begin{equation}\label{eq:xi_defined}
\xi_G(z) = \sum_{k=0}^\infty z^{-1-k} \Tr(H_G^k),
\end{equation}
where $\Tr$ denotes the trace.
\end{definition}
Since $H_G$ has a finite number of eigenvlues, $\xi_G(z)$ has finitely
many poles.

It is not hard to see that
$$
\frac{-\zeta_G'(z)}{\zeta_G(z)} 
=\xi_G(z) + e_G(z),
$$
where
$$
e(z)
=\frac{n(d-2)}{2z} 
\left[ \frac{1}{1-z} + \frac{1}{1+z} - \frac{1}{1-(d-1)z}
- \frac{1}{1+(d-1)z}  \right] \  .
$$
It follows that $e(z)$ is a rational function with poles only at
$z=\pm 1$ and $\pm1/(d-1)$, and hence (for $\alpha>0$ small)
\begin{equation}\label{eq:N_xi}
N_\pm(G) = 
\frac{1}{2\pi i} 
\oint_{C_\pm} \xi_G(z)\,dz .
\end{equation}
It is also important to note that
the poles at $z=\pm1$ account for the poles $\pm 1$ of 
$\zeta_G(z)$ arising from the $(1-z^2)^{-\chi(G)}$ in the Ihara
determinantal formula.

The expected value of the
traces of $H_G^k$ for a random graph are instrumental to
the Broder-Shamir style trace methods
\cite{broder,friedman_random_graphs,friedman_alon,friedman_kohler}.
There one sees that for large $n$ and small $k$  we have
\begin{equation}\label{eq:Hashimoto_expansion}
\EE_{G\in\cG_{n,d}}\bigl[ \Tr(H_G^k) \bigr] =
P_0(k) + P_1(k)n^{-1} + \cdots P_{r-1}(k) n^{1-r} + {\rm err}_r(n,k),
\end{equation}
where
$$
|{\rm err}_r(n,k) | \le (Ck)^{2r} (d-1)^k n^{-r}.
$$
It is also known that (\cite{friedman_alon}, based on
\cite{broder,friedman_random_graphs}) that
$$
P_0(k) = O(kd) + \sum_{k'|k} (d-1)^{k'},
$$
where $k'|k$ means that $k'$ is a positive integer dividing $k$;
in particular, we have
$$
(d-1)^k + \II_{{\rm even}}(k) (d-1)^{k/2} + O(d-1)^{k/3},
$$
where $\II_{{\rm even}}(k)$ is the indicator function of $k$ being even.

It is now intriguing to make the following calculation: let
\begin{equation}\label{eq:xi_0_defined}
\xi_{G,0}(z) = \sum_{k=0}^\infty z^{-1-k} P_0(k) ,
\end{equation}
which has a simple pole at $z=1$ with residue $1$,
and a simple pole at
$z=\pm(d-1)^{-1/2}$ with residue $1/4$ at each, 
and no other poles of absolute value greater
than $(d-1)^{-1/3}$; hence
$$
\EE_{G\in\cG_{n,d}}\left[ 
\frac{1}{2\pi i} 
\oint_{C_+} \xi_{G,0}(z)\,dz 
\right] = 1/4,
$$
and similarly with $C_-$ replacing $C_+$.

\begin{conjecture} 
\label{co:exchange}
Let $d\ge 4$ be a fixed, even integer.  For any
$d$-regular graph, $G$, let
$N_{>+}(G)$ be the number of eigenvalues of $A_G$ strictly greater
than $(d-1)^{1/2}$.  Then
$$
\lim_{n\to\infty}
\EE_{G\in\cG_{n,d}}\left[
N_{>+}(G) \right]
= \frac{1}{4},
$$
and the same with $N_{<-}(G)$, defined similarly,
replacing $N_{>+}(G)$.
\end{conjecture}

\begin{remark} 
Assume that Conjecture~\ref{co:exchange} holds for some fixed $d$.
Let ${\rm TwoOrMore}_d$ 
be the set of $d$-regular graphs that have at least two
eigenvalues of absolute value strictly between $2(d-1)^{1/2}$
and $d$.
Then we claim that 
\begin{enumerate}
\item 
for any $\epsilon>0$
we have that for sufficiently large $n$ at least
$1/2-\epsilon$ of the graphs of $\cG_{n,d}$ are Ramanujan; and
\item
the $1/2$ can be replaced by a number strictly greater than $1/2$
if there is a $\delta>0$ such that for sufficiently large $n$ we
have that
$$
\prob{G\in\cG_{n,d}}{G\in{\rm TwoOrMore}_d} \ge \delta.
$$
\end{enumerate}
\end{remark}


We make a similar conjecture regarding the {\rm new eigenvalues} of
a random covering map, $\cC_n(B)$
of degree $n$ to any fixed, $d$-regular graph, $B$.
We describe this conjecture later in this section.

Let us briefly describe the approach we suggest to studying this
conjecture.
The coefficients, $P_i(k)$, of the powers of
$1/n$ in \eqref{eq:Hashimoto_expansion} are known to be problematic
when, for $d$ fixed, $i$ is sufficiently large; this problem arose
in \cite{friedman_random_graphs}, and can be viewed as arising from
{\em tangles} \cite{friedman_alon,friedman_kohler}; tangles are
circumvented in \cite{friedman_alon,friedman_kohler} by
{\em modified traces}.
In particular, the $P_i(k)$ will involve increasingly more poles
greater than $(d-1)^{1/3}$ as $i$ increases.
Here we suggest a simpler approach that we believe is interesting to
study; roughly speaking, the idea is to view $\Tr(H_G^k)$ as the
sum over strictly non-backtracking closed walks in $G$, and to
divide such walks into two classes: (1) those that contribute more than
$O(d-1)^{k/3}$ for large $d$, and (2) other walks; the walks of (2)
should include all tangles.
Hence the $\Tr(H_G^k)$ contributions should fall into two classes,
and hence writing
$$
P_i(k)= Q_i(k)+T_i(k),
$$
where $Q_i(k)$ only has poles at $z=1$, $z=\pm(d-1)^{1/2}$, and all
other poles in absolute value at most $(d-1)^{1/3}$;
we refer to the $Q_i(k)$ as {\em lazy coefficients}.

In this article we will define the 
{\em lazy coefficients}, $Q_i(k)$ in a self-contained fashion;
this requires the ideas of 
\cite{friedman_random_graphs} (e.g., the {\em type} of a walk),
but does not require the notions of a {\em tangle} or a {\em modified
trace} of
\cite{friedman_alon,friedman_kohler}.  

In effect, the $T_i(k)$ will contain a variety of discarded terms from
the $P_i(k)$, including those arising from tangles.
After defining the {\em lazy coefficients}, $Q_i(k)$, we set
$$
\xi_{G,i}(z) = \sum_{k=0}^\infty z^{-1-k} Q_i(k) 
$$
for $i\ge 1$; in fact, $Q_0(k)=P_0(k)$, so the above also holds for
$i=0$.
The main task is to prove that
$$
\sum_{i=0}^\infty  n^{-i} \xi_{G,i}(z)
$$
(or simply the $i=0$ term) is a good approximation to $\xi(z)$
for the contour integrals
\eqref{eq:N_xi}

Before summarizing the rest of the sections in this paper, let
us state the generalized conjecture for a similar probability space of 
random covering maps, $\cC_n(B)$, to an arbitrary graph, $B$.

\begin{notation}
For an arbitrary operator, $A$, on a Hilbert space, let
$\rho(A)$ denote its spectral radius, and $\rho^{1/2}(A)$ its
(positive) square root.
Given an arbitrary covering
map of (finite) graphs, let $\pi\from G\to B$, let 
$\Specnew_B(A_G)$ be the new spectrum of $A_G$ with respect to
$\pi$, and let
$N_{+>}^{\mathrm{new}}(G;B)$ be the number new eigenvalues greater than
$\rho(A_{\widetilde{B}})$, where
$\widetilde(B)$ is the universal cover
of $B$.  Similarly define $N_{-<}^{\mathrm{new}}(G;B)$.
If $B$ is a graph without half-loops, let $\cC_n(B)$ be the model
of a random covering map to $B$ formed by choosing for each edge of 
$B$, independently, an element of $\cS_n$ chosen uniformly.
\end{notation}

\begin{conjecture} 
For any connected graph, $B$, we have
$$
\lim_{n\to\infty}
\EE_{G\in\cG_{n,d}}\left[
N_{>+}^{\mathrm{new}}(G;B) \right]
$$
and equals $1/4$ if $B$ is not-bipartite, and $1/2$ if $B$
is bipartite, and simiarly with $<-$ replacing $>+$ in the $N$ subscript.
\end{conjecture}

The rest of this paper is organized as follows.
In Section~\ref{se:prelim} we will give some preliminary
definitions.
In Section~\ref{se:first_coef} we will describe how to
compute the coeffiecient, $P_1(k)$, and the lazy coefficient, $Q_1(k)$,
described above.

}


\section{A Simpler Variant of the $P_i$ and $\cP_i$}
\label{se:simpler}

Part of the problem in dealing with the
$P_i$ of \eqref{eq:Pdefined} and $\cP_i$ of \eqref{eq:cPdefined}
is that the $\cP_i$ can, at least in principle (and we think likely), have
roughly $i^i$ real poles between $1$ and $(d-1)^{1/2}$, where
the $i^i$ represents roughly the number of 
{\em types} \cite{friedman_random_graphs,friedman_alon,friedman_kohler}
of order $i$ (see also \cite{puder} for similar problems).

However, the methods of \cite{friedman_kohler} 
(``mod-$S$'' functions, Section~3.5)
show that
for fixed $d$, and fixed $i$ bounded by a constant times $(d-1)^{1/3}$,
we have that $\cP_i(u)$ has poles at 
only $u=\pm 1$ and $u=\pm (d-1)^{-1/2}$ for $|u|>(d-1)^{-2/3}$.
Hence for fixed $d$ and $i$ sufficiently small, the $\cP_i(u)$ are much
simpler to analyze.
However the calculations 
\cite{friedman_random_graphs,friedman_alon,friedman_kohler} show
something a bit stronger: namely if we consider $\cP_{i,d}(u)$ and
$P_{i,d}(u)$ as depending both on $i$ and $d$, then in fact
\begin{equation}\label{eq:large_d}
P_i(k) = (d-1)^k Q_i(k,d-1) + 
\II_{{\rm even}} (d-1)^{k/2} R_i(k,d-1)  
\end{equation}
$$
+ 
\II_{{\rm odd}} (d-1)^{k/2} S_i(k,d-1)   + O(d-1)^{k/3}Ck^C
$$
for some constant $C=C(i)$, and where $Q_i(k,d-1),R_i(k,d-1),S_i(k,d-1)$ 
are polynomial
in $k$ (whose degree is bounded by a function of $i$),
whose coefficients are rational functions of $d-1$.

\begin{definition} 
The {\em large $d$ polynomials of order $i$}
are the functions $Q_i(k,d),R_i(k,d),S_i(k,d)$, determined uniquely
in \eqref{eq:large_d}.  The associated {\em approximate principle term} to
$Q_i,R_i,S_i$ is the function
$$
\widetilde P_i(k,d) = 
(d-1)^k Q_i(k,d-1) + 
\II_{k\ {\rm even}} (d-1)^{k/2} R_i(k,d-1)  +            
\II_{k\ {\rm odd}} (d-1)^{k/2} S_i(k,d-1) ,
$$
and the associated {\em approximate generating function} is
$$
\widehat\cP_i(u) = \sum_{k=0}^\infty u^{-1-k} \widehat P_i(k) (d-1)^{-k} .
$$
\end{definition}

It follows that the $\widehat\cP_i(u)$ have poles
only at $u=1$ and $u=\pm(d-1)^{-1/2}$ outside any disc about zero
of radius strictly greater than $(d-1)^{-2/3}$.

The benefit of working with the $\widehat P_i(k,d)$ and
$\widehat\cP_i(u)$ is that there are various ways of trying to
compute these functions.
For example, one can fix $k$ {\em and $n$}, and consider what
happens as $d\to\infty$.
In this case we are studying the expected number of fixed points of
strictly reduced words of length $k$ in the alphabet
$$
\Pi = \bigl\{ 
\pi_1,\pi_1^{-1},\pi_2,\pi_2^{-1},\ldots,\pi_{d/2},\pi_{d/2}^{-1}
\bigr\}
$$
with $d$ large.  If such as word has exactly one occurrence of
$\pi_j,\pi_j^{-1}$ for some $j$, which the overwhelmingly typical,
since $k$ is fixed and $d$ is large, then the expected number 
of fixed point is exactly $1$.
Hence we are lead to consider those words such that for every $j$,
either 
$\pi_i,\pi_i^{-1}$ does not occur, or it occurs at least twice.
The study of such words does not seem easy, although perhaps
this can be understood, say with the recent works
\cite{puder,puder_p}.

This type of study also seems to ressemble more closely the standard
(and much more studied)
random matrix theory than the $d$-regular spectral graph theory with
$d$ fixed; perhaps methods from random matrix theory can be applied
here.

\section{Random Graph Covering Maps and Other Models}
\label{se:covering}

We remark that \cite{friedman_alon} studies other models of random
$d$-regular graphs on $n$ vertices for $d$ and $n$ of arbitrary
partity (necessarily having half-edges if $d$ and $n$ are odd).
We remark that if $d$ is odd and $n$ is even, we can form a model of a
$d$-regular graph based on $d$ perfect matchings, called
$\cI_{n,d}$ in \cite{friedman_alon}.  A curious model of random regular
graph is $\cH_{n,d}$ (for $d$ even), which is a variant of $\cG_{n,d}$
where random permutations are chosen from the subset of permutations
whose cyclic structure consists of a single cycle of length $n$;
since such permutations cannot have self-loops (for $n>1$), for most
$d$, the probability of eigenvalues lying in the Alon region 
is larger for $\cH_{n,d}$ than for $\cG_{n,d}$.

For the rest of this section
we give a natural extension of our main conjectures
to the more general model of a random cover of a graphs.
$\cG_{n,d}$ and $\cI_{n,d}$ are special cases of a
``random degree $n$ covering map of a base graph,'' where the
base graphs are, respectively, a bouquet of $d/2$ whole-loops (requiring
$d$ to be even), and a bouquet of $d$ half-loops (where $d$ can be
either even or odd).
We shall be brief, and refer the reader to
\cite{friedman_kohler} for details.

\begin{definition} A 
\emph{morphism of directed graphs}, 
\( \varphi\from
G\rightarrow H \) is a pair \( \varphi=(\varphi_V, \varphi_E) \) for which
\( \varphi_V \from V_G \rightarrow V_H\) is a map of vertices and \(
\varphi_E \from \Edir_G \rightarrow \Edir_H \) is a map of directed edges
satisfying \( h_H(\varphi_E(e)) = \varphi_V(h_G(e)) \) and \(
t_H(\varphi_E(e)) = \varphi_V(t_G(e)) \) for all \( e \in \Edir_G \). 
We refer to the values of $\varphi_V^{-1}$ as {\em vertex fibres} of $\varphi$,
and similarly for edge fibres.
We often more simply write
\(\varphi\) instead of \(
\varphi_V\) or \( \varphi_E \).
\end{definition}

\begin{definition}
\label{de:etale}
A morphism of directed graphs \( \nu \from H \rightarrow G \)
is a \emph{{covering map}} if it is a
local isomorphism, that is
for any vertex \(w \in V_H\), the edge morphism \( \nu_E \) induces a
bijection (respectively, injection)
between \( t_H^{-1}(w) \) and \( t_G^{-1}(\nu(w)) \) and a
bijection (respectively, injection)
between \( h_H^{-1}(w) \) and \( h_G^{-1}(\nu(w)) \).
We call \( G \) the
\emph{base graph} and \( H \) a \emph{covering graph of $G$}.

If \( \nu\from H \rightarrow G \) is a covering map and \(G\) is connected,
then the \emph{degree} of \(\nu\), denoted \( [H\colon G] \), is the number
of preimages of a vertex or edge in \(G\) under \(\nu\) (which does not
depend on the vertex or edge). If \(G\) is not connected, we insist that
the number of preimages of \(\nu\) of a vertex or edge is the same,
i.e., the degree is independent of the connected component, and we will
write this number as \( [H\colon G] \).
In addition, we often refer to $H$, without $\nu$ mentioned explicitly,
as a {\em covering graph} of $G$.

A morphism of graphs is a \emph{covering map} 
if the morphism of the
underlying
directed graphs is a covering map.
\end{definition}

\begin{definition} If $\pi\from G\to B$ is a covering map of directed
graphs,
then an {\em {old function} (on $V_G$)} is a 
function on $V_G$ 
arising via pullback
from $B$, i.e., a function $f\pi$, where $f$ is a function (usually
real or complex valued), i.e., a function on $V_G$ (usually real or
complex valued) whose value depends only on the $\pi$ vertex fibres.
A {\em {new function} (on $V_G$)} is a function 
whose sum on each vertex
fibre is zero.
The space of all functions (real or complex)
on $V_G$ is a direct sum of the old and new functions, an orthogonal
direct sum on the natural inner product on $V_G$, i.e.,
$$
(f_1,f_2) = \sum_{v\in V_G} \overline{ f_1(v)} f_2(v) .
$$
The adjacency matrix,
$A_G$, viewed as an operator, takes old functions to old functions and
new functions to new functions.  The
{\em {new spectrum}} of $A_G$, which we often denote
$\specnew_B(A_G)$, is the spectrum of $A_G$
restricted to the new functions; we similarly define the 
{\em {old spectrum}}.

This discussion holds, of course, equally well if
$\pi\from G\to B$ is a covering
morphism of graphs, by doing everything over the underlying directed
graphs.
\end{definition}

We can make similar definitions for the spectrum of the Hashimoto 
eigenvalues.  
First, we observe that covering maps induce covering maps on
directed line graphs; let us state this formally (the proof is easy).

\begin{proposition} Let \( \pi \from G \to B \) be a covering map. Then \( \pi \)
induces a covering map \( \pi^{\Line} \from \Line(G) \to \Line(B) \).  
\end{proposition}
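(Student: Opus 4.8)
The plan is to write the induced morphism $\pi^{\Line}$ explicitly and then check, in turn, that it is well defined, that it is a morphism of directed graphs, and that it is a local isomorphism. Recall $V_{\Line(G)}=\Edir_G$ and $V_{\Line(B)}=\Edir_B$, so on vertices I set $\pi^{\Line}_V=\pi_E$; and since $\Edir_{\Line(G)}$ is the set of pairs $(e_1,e_2)\in\Edir_G\times\Edir_G$ with $h_G(e_1)=t_G(e_2)$ and $\iota_G(e_1)\ne e_2$ (and likewise for $B$), I set $\pi^{\Line}_E(e_1,e_2)=(\pi_E(e_1),\pi_E(e_2))$. Throughout I use that $\pi$, being a morphism of graphs, intertwines the head, tail and opposite maps: $h_B\pi_E=\pi_V h_G$, $t_B\pi_E=\pi_V t_G$, and $\iota_B\pi_E=\pi_E\iota_G$.

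First I would check that $\pi^{\Line}_E$ sends $\Edir_{\Line(G)}$ into $\Edir_{\Line(B)}$. The equality $h_B(\pi_E e_1)=t_B(\pi_E e_2)$ is immediate from the first two intertwining relations together with $h_G(e_1)=t_G(e_2)$. For the non-backtracking requirement $\iota_B(\pi_E e_1)\ne\pi_E e_2$, suppose to the contrary $\iota_B(\pi_E e_1)=\pi_E e_2$; the third intertwining relation turns this into $\pi_E(\iota_G e_1)=\pi_E(e_2)$. Both $\iota_G e_1$ and $e_2$ lie in the fibre $t_G^{-1}(h_G(e_1))$ --- for $e_2$ this is the head--tail condition, and $t_G(\iota_G e_1)=h_G(e_1)$ since $t_G\iota_G=h_G$ --- and $\pi_E$ is injective there because $\pi$ is a covering map; hence $\iota_G e_1=e_2$, contradicting $(e_1,e_2)\in\Edir_{\Line(G)}$. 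That $\pi^{\Line}$ is a morphism of directed graphs is then a one-line verification: the structure maps of $\Line(G)$ and $\Line(B)$ are the two coordinate projections, so $t_{\Line(B)}\bigl(\pi^{\Line}_E(e_1,e_2)\bigr)=\pi_E(e_1)=\pi^{\Line}_V\bigl(t_{\Line(G)}(e_1,e_2)\bigr)$, and similarly for the head maps.

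The substantive step is to show that $\pi^{\Line}$ is a local isomorphism, i.e.\ that for each vertex $e\in V_{\Line(G)}=\Edir_G$ the map $\pi^{\Line}_E$ restricts to a bijection $t_{\Line(G)}^{-1}(e)\to t_{\Line(B)}^{-1}(\pi_E e)$ and a bijection $h_{\Line(G)}^{-1}(e)\to h_{\Line(B)}^{-1}(\pi_E e)$. I would unwind the definition of $\Line$: projection to the second coordinate identifies $t_{\Line(G)}^{-1}(e)$ with $t_G^{-1}(h_G e)\setminus\{\iota_G e\}$ (note $\iota_G e\in t_G^{-1}(h_G e)$ as $t_G\iota_G=h_G$), and likewise identifies $t_{\Line(B)}^{-1}(\pi_E e)$ with $t_B^{-1}(h_B\pi_E e)\setminus\{\iota_B\pi_E e\}=t_B^{-1}(\pi_V h_G e)\setminus\{\pi_E\iota_G e\}$. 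Because $\pi$ is a covering map, $\pi_E$ gives a bijection $t_G^{-1}(h_G e)\to t_B^{-1}(\pi_V h_G e)$, and this bijection carries the deleted point $\iota_G e$ to the deleted point $\pi_E\iota_G e$; hence it restricts to a bijection between the two punctured fibres, which --- since these identifications intertwine $\pi^{\Line}_E$ with $\pi_E$ --- is exactly the claim for $t$. The claim for $h$ is symmetric, using projection to the first coordinate, the identity $h_G\iota_G=t_G$, and the covering property of $\pi$ at the vertex $t_G e$. (For the weaker notion of covering map in which one only requires $\pi_E$ to induce injections on fibres, one replaces ``bijection'' by ``injection'' throughout and every step still goes through.) The one place where care is needed --- and the most likely source of a slip --- is the bookkeeping around the excluded directed edges $\iota_G e$ and their images: one must invoke $\iota_B\pi_E=\pi_E\iota_G$ at each stage to see that deleting the backtracking edge upstairs corresponds under $\pi_E$ to deleting the backtracking edge downstairs.
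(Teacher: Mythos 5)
Your proof is correct and is exactly the routine verification the paper has in mind when it says ``the proof is easy'': the paper itself provides no proof, so there is no alternative argument to compare against. You define $\pi^{\Line}$ in the only natural way, check it lands in $\Edir_{\Line(B)}$ (where the non-backtracking condition is the one point requiring both the intertwining $\iota_B\pi_E=\pi_E\iota_G$ and injectivity of $\pi_E$ on tail-fibres), and then show local bijectivity by identifying the tail- and head-fibres of $\Line(G)$ with punctured fibres of $G$ and observing that $\pi_E$ respects the puncture; the parenthetical about the ``injection'' variant matches the paper's own phrasing of Definition~\ref{de:etale}. The one hypothesis you use that the paper leaves implicit is that a morphism of graphs (as opposed to directed graphs) commutes with the involutions, i.e.\ $\iota_B\pi_E=\pi_E\iota_G$; this is the standard convention and clearly what the paper intends, but since the paper only formally defines morphisms of \emph{directed} graphs, it is worth flagging that you are invoking it --- without it, $\pi^{\Line}$ need not even be well defined.
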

Since $\Line(G)$ and $\Line(B)$ are directed graphs, the above
discussion of new and old functions, etc., holds for
$\pi^{\Line} \from \Line(G) \to \Line(B)$; e.g., new and old functions
are functions on the vertices of $\Line(G)$, or, equivalently,
on $\Edir_G$.

\begin{definition}\label{de:new_Hashimoto}
Let \( \pi \from G \to B \) be a covering map.  We define the
{\em new Hashimoto spectrum of $G$ with respect to $B$}, denoted
$\Specnew_B(H_G)$ to be
the spectrum of the Hashimoto matrix restricted to the new
functions on $\Line(G)$, and
$\rhonew_B(H_G)$ to be the supremum of the norms of
$\Specnew_B(H_G)$.
\end{definition}
We remark that
$$
\sum_{\mu\in \Specnew_B(H_G)} \mu^k = \Tr(H_G^k)-\Tr(H_B^k),
$$
and hence the new Hashimoto spectrum is independent of the
covering map from $G$ to $B$; similarly for the new adjacency spectrum.

To any base graph, $B$, one can describe various models 
of random covering maps of degree $n$.
The simplest is to assign to each edge, $e\in E_B$, a permutation
$\pi(e)$ on ${1,\ldots,n}$ with the stipulation that
$\pi(\iota_B e)$ is the inverse permutation of $\pi(e)$;
if $e=\{e_1,e_2\}$ is not a half-loop, and $\iota e_1=e_2$, 
then we may assign
an arbitrary
random permutation to $\pi(e_1)$ and then set $\pi(e_2)$ to be
the inverse permutation of $\pi(e_1)$; if $e$ is a half-loop,
then we can assign a random perfect matching to $\pi(e)$ if $n$ is 
even, and otherwise choose $\pi(e)$ to consist of one fixed point
(which is a half-loop in the covering graph) and a random perfect
matching on the remaining $n-1$ elements.
See \cite{friedman_kohler} for a more detailed description of
this model and other ``algebraic'' models of a random covering graph
of degree $n$ over a fixed based graph.

For $d\ge 4$ even, $\cG_{n,d}$ is the above model over the base
graph which is a bouquet of $d/2$ whole-loops, and
$\cI_{n,d}$ defined at the beginning of this section
is the above model over the base graph consisting of a bouqet
of $d$ half-loops.

Again, one can make similar computations and conjectures as with
$\cG_{n,d}$.
The analogue of $P_0(k)$ for random covers of degree $n$ over
general base graph, $B$, is easily seen to be
$$
P_0(k) 
= O(kd) + \sum_{k'|k} \trace(H_B^{k'}),
$$
(see \cite{friedman_kohler}, but essentially done in
\cite{friedman_relative}, a simple variant of \cite{broder}).
As for regular graphs, the $\trace(H_B^k)$ is essentially the old
spectrum, and the new spectrum term is therefore
$$
\II_{even}(k)\trace(H_B^{k/2}) + O(k)(d-1)^{k/3} .
$$
The analogue of $\cP_0(u)$ is therefore determined by this
$$
\trace(H_B^{k/2})
$$
term for $k$ even.
There are two cases of interest:
\begin{enumerate}
\item $B$ connected and not bipartite, in which case 
$d-1$ is an eigenvalue of $H_B$, and all other eigenvalues of $H_B$
are of absolute value strictly less than $d-1$; then we get
the similar formal expansions and conjectures as before; and
\item $B$ connected and bipartite, in which case $-(d-1)$ is also
an eigenvalue, and we get a conjectured expectation of $1/2$ for the
number of positive non-Ramanujan adjacency eigenvalues.
In this case, of course, each positive eigenvalue has a corresponding
negative eigenvalue.  So, again, we would conjecture that there is
a positive probability that a random degree $n$ cover of $B$ has
two pairs (i.e., four) non-Ramanujan eigenvalues; and, again, these
two conjectures imply that for fixed $d$-regular $B$ and 
sufficiently large $n$,
a strict majority of the degree $n$ covers of $B$ are relatively Ramanujan.
\end{enumerate}

\section{Numerical Experiments}
\label{se:numerical}

Here we give some preliminary
numerical experiments done to test our conjectures for
random $4$-regular graphs.
As mentioned before, the results of \cite{miller_novikoff} indicate
that one may need graphs of many more than the $n=400,000$
vertices and fewer that we used in our experiments.

We also mention that our experiments are Bernoulli trials with
probability of success that seems to be between $.17$ and $.26$.
Hence for $n\ge 100,000$, where we test no more than $500$ random
sample graphs, the hundredths digit is not significant.
We have tested $10,000$ examples only for $n\le 10,000$.

For the model $\cG_{n,4}$, of a $d=4$-regular graph generated by
two of the $n!$ random permutations of $\{1,\ldots,n\}$, we computed
the total number of positive eigenvalues no smaller than $2(d-1)^{1/2}$.
We sampled
\begin{enumerate}
\item $10,000$ random graphs with $n=100$, $n=1000$, and $n=10,000$,
whose average number of such eigenvalues were
$1.2681$, $1.2258$, and $1.1942$, respectively;
\item $500$ random graphs with $n=100,000$, with an average of $1.176$; 
\item $250$ random graphs with $n=200,000$, with an average of $1.188$;
\item $79$ random graphs with $n=400,000$, with an average of $1.177$.
\end{enumerate}
Of course, there is always $\lambda_1(G)=d$, and there is a small
chance, roughly $1/n+O(1/n^2)$ that $G$ will be disconnected.
So our formal calculations suggest that we should see $1.25$ for 
very large $n$, or no more than this.
However there is no particularly evident convergence of these average
number of positive non-Ramanujan eigenvalues at these
values of $n$.

We also tried the model $\cH_{n,4}$, where the permutations are chosen
among the $(n-1)!$ permutations whose cyclic structure is a single
cycle of length $n$.  Since $\cH_{n,4}$ graphs are always connected,
and generally have less tangles than $\cG_{n,4}$
\cite{friedman_alon}, we felt this model
may give more representative results for the same values of $n$.
We sampled
\begin{enumerate}
\item $10,000$ random graphs with $n=100$, $n=1000$, and $n=10,000$,
whose average number of such eigenvalues were
$1.1268$, $1.161$, and $1.1693$, respectively;
\item $500$ random graphs with $n=100,000$, with an average of $1.192$; 
\item $55$ random graphs with $n=200,000$, with an average of $1.163$;
\item $87$ random graphs with $n=400,000$, with an average of $1.149$.
\end{enumerate}
Again, we see no evident convergence at this point.



\providecommand{\bysame}{\leavevmode\hbox to3em{\hrulefill}\thinspace}
\providecommand{\MR}{\relax\ifhmode\unskip\space\fi MR }
\providecommand{\MRhref}[2]{%
  \href{http://www.ams.org/mathscinet-getitem?mr=#1}{#2}
}
\providecommand{\href}[2]{#2}

\end{document}